\documentclass[a4paper,reqno]{amsart}

\usepackage{amssymb}
\usepackage{pgfmath}
\usepackage{latexsym}
\usepackage{amsmath}
\usepackage{amsthm, lmodern, color}
\usepackage{bbm}

\usepackage[normalem]{ulem}
\definecolor{darkred}{rgb}{0.6,0.2,0.2}
\usepackage[colorlinks=true,linkcolor=blue, citecolor=darkred]{hyperref}


\usepackage{pgfplots}
\pgfplotsset{compat=1.15}
\usepackage{mathrsfs}
\usetikzlibrary{arrows}
\definecolor{ududff}{rgb}{0.30196078431372547,0.30196078431372547,1}
\definecolor{zzttqq}{rgb}{0.6,0.2,0}
\definecolor{xdxdff}{rgb}{0.49019607843137253,0.49019607843137253,1}
\definecolor{ududff}{rgb}{0.30196078431372547,0.30196078431372547,1}

\usepackage{hyperref}
\usepackage{xcolor}
\usepackage{amsmath}


\definecolor{darkblue}{rgb}{0.1,0.1,0.6}
\newcommand{\Hm}[1]{\leavevmode{\marginpar{\tiny%
			$\hbox to 0mm{\hspace*{-0.5mm}$\leftarrow$\hss}%
			\vcenter{\vrule depth 0.1mm height 0.1mm width \the\marginparwidth}%
			\hbox to
			0mm{\hss$\rightarrow$\hspace*{-0.5mm}}$\\
			\relax\raggedright #1}}}

\def\phi{\varphi}

      \def\dC{{\mathbb C}}

   \def\dN{{\mathbb N}}   
      \def\dR{{\mathbb R}}

      \def\cL{{\mathcal L}}

\DeclareMathOperator\Real{{\text{\rm Re}}}



\DeclareMathOperator{\R}{\mathbb{R}}

\def\dd{\,\mathrm{d}}

\newcommand{\beq}{\begin{equation} \begin{split}}
\newcommand{\eeq}{\end{split} \end{equation}}

\newcommand\Omg{\Omega}

\renewcommand\and{\qquad\text{and}\qquad}

\newcommand\sm{\setminus}

\newcommand{\comm}[1]{}

\def\sfH{\mathsf{H}}

\def\bm1{\mathbbm{1}}

\def\p{\partial}

\def\arr{\rightarrow}

\def\tt{\theta}
\def\aa{\alpha}
\def\lm{\lambda}



\def\p{\partial}

\def\sfH{\mathsf{H}}

\def\sfP{\mathsf{P}}
\def\dd{{\,\mathrm{d}}}

\def\sfP{\mathsf{P}}

\def\sfV{\mathsf{V}}

\newcommand{\beu}{\begin{equation*}}
\newcommand{\eeu}{\end{equation*}}
\newcommand{\besu}{\begin{equation*}
\begin{aligned}}
\newcommand{\eesu}{\end{aligned}
\end{equation*}}
\newcommand{\bes}{\begin{equation}
\begin{aligned}}
\newcommand{\ees}{\end{aligned}
\end{equation}}

\newcommand\frh{\mathfrak h}

\newcommand\ov{\overline}

\newcommand{\dom}{\mathrm{dom}\,}

\definecolor{darkblue}{rgb}{0.2,0.2,0.6}

\def\sfA{{\mathsf A}}
\def\sfL{{\mathsf L}}
\def\sfM{{\mathsf M}} 


\renewcommand\setminus{\mathbin{\mathpalette\rsetminusaux\relax}}
\newcommand\rsetminusaux[2]{\mspace{-4mu}
  \raisebox{\rsmraise{#1}\depth}{\rotatebox[origin=c]{-20}{$#1\smallsetminus$}}
 \mspace{-4mu}
}
\newcommand\rsmraise[1]{%
  \ifx#1\displaystyle .8\else
    \ifx#1\textstyle .8\else
      \ifx#1\scriptstyle .6\else
        .45%
      \fi
    \fi
  \fi}

\makeatletter
\newcommand*{\rom}[1]{\expandafter\@slowromancap\romannumeral #1@}
\makeatother


\newtheorem{theorem}{Theorem}[section]
\newtheorem{proposition}[theorem]{Proposition}
\newtheorem{corollary}[theorem]{Corollary}
\newtheorem{lemma}[theorem]{Lemma}

\theoremstyle{definition}

\theoremstyle{definition}

\numberwithin{equation}{section}

\title[The second positive Neumann eigenvalue for parallelograms]{A note on optimization of the second positive Neumann eigenvalue for parallelograms}

\author[V.~Lotoreichik]{Vladimir Lotoreichik}
\address{Department of Theoretical Physics,
Nuclear Physics Institute, Czech Academy of Sciences, 25068, \v{R}e\v{z}, Czech Republic}
\email{lotoreichik@ujf.cas.cz}

\author[J.~Rohleder]{Jonathan Rohleder}
\address{Matematiska institutionen \\ Stockholms universitet \\
106 91 Stockholm \\
Sweden}
\email{jonathan.rohleder@math.su.se}

\begin{document}

\begin{abstract}
It has recently been conjectured by Bogosel, Henrot, and Michetti that the second positive eigenvalue of the Neumann Laplacian is maximized, among all planar convex domains of fixed perimeter, by the rectangle with one edge length equal to twice the other. In this note we prove that this conjecture is true within the class of parallelogram domains.
\end{abstract}

\maketitle

\section{Introduction}

On a bounded, sufficiently regular domain $\Omega \subset \R^2$ the eigenvalue problem for the Laplacian with Neumann boundary conditions is given by
\begin{align*}
 \begin{cases}
 - \Delta \psi = \mu \psi & \text{in}~\Omega, \\
 \hspace*{2.0mm} \partial_\nu \psi = 0 & \text{on}~\partial \Omega,
 \end{cases}
\end{align*}
where $\partial_\nu \psi$ denotes the derivative of $\psi$ in the direction of the exterior unit normal field on the boundary $\partial \Omega$. This problem has a sequence of eigenvalues $0 = \mu_1 (\Omega) < \mu_2 (\Omega) \leq \mu_3 (\Omega) \leq \dots$, which we count according to their respective multiplicities. A classical result due to Szeg\H{o} (and Weinberger, in higher dimensions) states that among all $\Omega$ of fixed area, $\mu_2 (\Omega)$ is maximized by the disk, see e.g.\ \cite[Theorem 7.1.1]{H06}. About half of a decade later it was shown by Girouard, Nadirashvili, and Polterovich \cite{GNP09} that the unique maximizer of $\mu_3 (\Omega)$ under an area constraint is the disjoint union of two disks; this was generalized to higher dimensions by Bucur and Henrot \cite{BH19}. A related question which has attracted considerable interest in recent years is the maximization of eigenvalues under a perimeter constraint, i.e.\ determining the quantity
\begin{align}\label{eq:problem}
 \sup \left\{ \mu_k (\Omega) |\partial \Omega|^2 : \Omega \subset \R^2~\text{convex} \right\}
\end{align}
for some $k \geq 2$, where $|\partial \Omega|$ denotes the perimeter of $\Omega$, and finding optimal shapes if they exist. It is well known that this supremum is infinity if the convexity assumption on $\Omega$ is omitted, see, e.g. the construction in~\cite[Proposition 3.3]{HLL24}. On the other hand, it was recently shown in \cite[Theorem 2.6]{BHM24} that this supremum is always attained, i.e.\ a maximizing shape exists for each $k$. 

The case $k = 2$ has seen considerable progress very recently. In fact, it has been noticed in~\cite[Problem 9.2]{LS09} that an optimal shape for the maximization problem~\eqref{eq:problem} with $k = 2$ cannot be the disk
and an open problem about the optimal shape has been posed. It was stated as a conjecture by R.~Laugesen in~\cite{ABLW09}
that $\mu_2 (\Omega) |\p\Omega|^2 \leq 16 \pi^2$ holds for all convex $\Omega$ and that equality holds if and only if $\Omega$ is a square or an equilateral triangle. This has been verified for special classes of convex domains such as triangles \cite{LS09} and parallelograms \cite{HLL24}, see also \cite{LR24}. However, it has been a recent breakthrough that the conjecture is true within the class of all convex domains with two (not necessarily orthogonal) axes of symmetry \cite{HLL24}. 

In this note we focus on the case $k = 3$ of the second positive Neumann eigenvalue. It was conjectured and numerically verified in~\cite[Section 4]{BHM24} that among all planar convex domains of fixed perimeter the third Neumann eigenvalue is maximised by the rectangle one side of which is twice as long as the other. Relying on the fact that the third Neumann eigenvalue of the rectangle can be explicitly computed, it is not hard to see that this conjecture is equivalent to the upper bound
\begin{equation}\label{eq:mu3}
	\mu_3(\Omg) |\partial \Omega|^2 \le 36\pi^2
\end{equation}
for any bounded convex domain $\Omg\subset\dR^2$. Inspired by this conjecture, in this note we verify the bound~\eqref{eq:mu3} among parallelograms. This class of domains is a natural choice due to the fact that it includes all rectangles as a subclass and thus the conjectured optimizer is contained. In fact, we prove that the unique maximizing domain among parallelograms is indeed the rectangle with edge length ratio 2:1. This is done by a variational argument constructing appropriate trial functions. In our analysis, we parametrize the parallelogram by two real parameters. After that we split the parameter space into four disjoint regions, where in  three of them we employ appropriate trial subspaces, while in the remaining region we make use of a general upper bound due to Kr\"oger.

In this context, we also refer to the overview~\cite{LS17} on eigenvalue optimization for the Laplacian with Dirichlet and Neumann boundary conditions on special domains such as triangles or rhombi. More recent contributions address optimization of the Robin eigenvalues on special domains such as triangles~\cite{KLV23} or quadrilaterals~\cite{CL24}. 

As for the structure of this note, in Section \ref{sec:trafo} we provide an alternative variational principle for the eigenvalues of the Neumann Laplacian on a parallelogram based on a linear transformation onto a rectangle. In Section \ref{sec:trial} we compute the corresponding Rayleigh quotients for a collection of selected trial functions. Finally, in Section \ref{sec:main} we formulate and prove the main result of this note, Theorem \ref{thm:mu3}.

\section{The Neumann Laplacian on parallelograms}\label{sec:trafo}

The aim of this section is to derive an alternative variational principle for the eigenvalues of the Neumann Laplacian on a parallelogram. Let us denote by $\Omega_{c,d} \subset \R^2$ the (open) parallelogram spanned by the vectors $(1, 0)^\top$ and $(c, d)$, see Figure~\ref{fig1}, where $c \geq 0, d > 0$ and $c^2 + d^2 \leq 1$. 
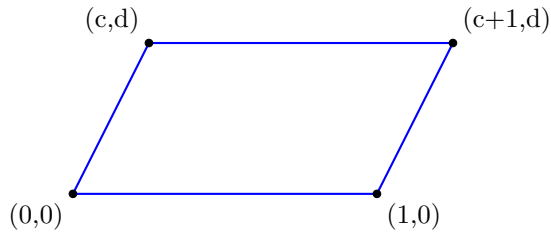
\begin{figure}[h]
\begin{tikzpicture}
\coordinate (A) at (0, 0);  
\coordinate (B) at (4, 0);  
\coordinate (C) at (5, 2);  
\coordinate (D) at (1, 2);  
\draw[thick, blue] (A) -- (B) -- (C) -- (D) -- cycle;
\node[draw, fill=black, circle, inner sep=1pt] at (A) {};
\node[draw, fill=black, circle, inner sep=1pt] at (B) {};
\node[draw, fill=black, circle, inner sep=1pt] at (C) {};
\node[draw, fill=black, circle, inner sep=1pt] at (D) {};
\node[below left] at (A) {(0,0)};
\node[below right] at (B) {(1,0)};
\node[above right] at (C) {(c+1,d)};
\node[above left] at (D) {(c,d)};
\end{tikzpicture}
\caption{The parallelogram $\Omg_{c,d}$ with vertices having 
coordinates $(0,0)$, $(c,d)$, $(c+1,d)$, and $(1,0)$.}
\label{fig1}
\end{figure}
The vertices of this parallelogram, enumerated in the clockwise order, have coordinates $(0,0)$, $(c,d)$, $(c+1,d)$, and $(1,0)$, respectively. It is clear that in order to verify~\eqref{eq:mu3} among all parallelograms it suffices to verify this bound among all parallelograms $\Omega_{c,d}$ satisfying the mentioned restrictions on $c$ and $d$; indeed, any parallelogram can be scaled in such a way that its largest side has length $1$, and then it can by rigid motion and reflection be brought into the above form. However, the expression $\mu_3 (\Omega) |\partial \Omega|^2$ is invariant under scaling, rigid motion and reflection.

Recall that the self-adjoint Neumann Laplacian $-\Delta_{\rm N}^{\Omg_{c,d}}$ in the Hilbert space $L^2(\Omg_{c,d})$ can be introduced via the first representation theorem using the closed, densely defined and non-negative quadratic form
\[
	\frh_{\rm N}^{\Omg_{c,d}}[u] := \int_{\Omega_{c,d}} |\nabla u|^2 \dd x \dd y,\qquad \dom \frh_{\rm N}^{\Omg_{c,d}} := H^1(\Omg_{c,d}).
\]
The spectrum of this operator is purely discrete thanks to compactness of the embedding of $H^1(\Omg_{c,d})$ into $L^2(\Omg_{c,d})$. We denote by $0 = \mu_1 (\Omega_{c,d}) < \mu_2 (\Omega_{c,d}) \leq \mu_3 (\Omega_{c,d}) \leq \dots$ the eigenvalues of $-\Delta_{\rm N}^{\Omg_{c,d}}$, counted with multiplicities.

We will now construct an operator acting on the rectangle $\Omega_{0,d}$ which is unitarily equivalent to $- \Delta_{\rm N}^{\Omega_{c,d}}$. For this, consider the linear mapping
\[
	\Phi\colon \Omega_{0,d} \to \Omega_{c,d},\qquad \Phi(x,y) := \begin{pmatrix}
		x +y\frac{c}{d} \\ y \end{pmatrix}.
\]
It is easy to see that $\Phi$ provides a bijection from the rectangle $\Omega_{0,d}$ onto the parallelogram $\Omega_{c,d}$. In the next lemma, we compute the quadratic form of the operator in $L^2(\Omg_{0,d})$, which is unitarily equivalent to the Neumann Laplacian on $\Omg_{c,d}$ via the transformation $\Phi$.

\begin{lemma}
	The Neumann Laplacian $-\Delta_{\rm N}^{\Omg_{c,d}}$ on $\Omg_{c,d}$ is unitarily equivalent to
	the self-adjoint operator $\sfH_{c,d}$ in the Hilbert space $L^2(\Omg_{0,d})$ associated with the closed, non-negative,
	and densely defined quadratic form
	\begin{equation}\label{eq:formcd}
	\begin{aligned}
		\frh_{c,d}[u] & := \int_0^d\int_0^1\left[
		\left(1+\frac{c^2}{d^2}\right)|\p_x u|^2 + |\p_y u|^2 -
		\frac{2c}{d} \Real \big(\p_x u \,\ov{\p_y u} \big)\right]\dd x \dd y,\\
		\dom\frh_{c,d}& := H^1(\Omg_{c,d}).
	\end{aligned}	
	\end{equation}
\end{lemma}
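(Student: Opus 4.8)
The plan is to transfer the whole problem from the parallelogram to the rectangle by pulling functions back along $\Phi$. I would introduce the composition operator
\[
U\colon L^2(\Omg_{c,d})\to L^2(\Omg_{0,d}),\qquad (U\psi)(x,y) := \psi(\Phi(x,y)),
\]
and first check that it is unitary. Since $\Phi$ is affine with constant Jacobian matrix having rows $(1,c/d)$ and $(0,1)$, its Jacobian determinant equals $1$, so $\Phi$ is area-preserving. The change-of-variables formula then gives $\|U\psi\|_{L^2(\Omg_{0,d})} = \|\psi\|_{L^2(\Omg_{c,d})}$, and since $\Phi$ is a bijection, $U$ is unitary with $U^{-1}\varphi = \varphi\circ\Phi^{-1}$.

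Next I would verify that $U$ intertwines the form domains. As $\Phi$ is an affine diffeomorphism with constant (in particular Lipschitz) Jacobian and Lipschitz inverse, composition with $\Phi$ maps $H^1(\Omg_{c,d})$ onto $H^1(\Omg_{0,d})$: the weak gradient transforms by the chain rule and the bounded Jacobian keeps all integrals finite. This identifies the form domain of $\frh_{c,d}$ (namely $H^1(\Omg_{0,d})$) with the image under $U$ of $\dom\frh_{\rm N}^{\Omg_{c,d}} = H^1(\Omg_{c,d})$.

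The computational heart is to insert the chain rule into the Dirichlet integral. Writing $u=U\psi$, denoting the coordinates on $\Omg_{c,d}$ by $(\xi,\eta)$, and using $\p_x\Phi = (1,0)^\top$, $\p_y\Phi = (c/d,1)^\top$, I obtain $(\p_\xi\psi)\circ\Phi = \p_x u$ and $(\p_\eta\psi)\circ\Phi = \p_y u - \tfrac{c}{d}\,\p_x u$. Substituting into $\int_{\Omg_{c,d}}|\nabla\psi|^2$ and changing variables (Jacobian $1$) gives
\[
\frh_{\rm N}^{\Omg_{c,d}}[\psi] = \int_0^d\!\!\int_0^1 \Big( |\p_x u|^2 + \big|\p_y u - \tfrac{c}{d}\,\p_x u\big|^2 \Big)\,\dd x\,\dd y .
\]
Expanding the square and using that $c/d$ is real, together with $\Real(\p_y u\,\ov{\p_x u}) = \Real(\p_x u\,\ov{\p_y u})$, reproduces exactly the integrand of $\frh_{c,d}$, so $\frh_{\rm N}^{\Omg_{c,d}}[\psi] = \frh_{c,d}[U\psi]$.

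Finally I would conclude. The identity of the forms on the matching domains shows that $\frh_{c,d}$ inherits closedness, density of its domain, and non-negativity from $\frh_{\rm N}^{\Omg_{c,d}}$ via the unitary $U$; the first representation theorem then yields that the associated self-adjoint operator satisfies $\sfH_{c,d} = U\,(-\Delta_{\rm N}^{\Omg_{c,d}})\,U^{-1}$, i.e.\ the two operators are unitarily equivalent, which is the assertion. I do not expect a genuine obstacle here: the only point requiring care is the bookkeeping of the chain rule and of the complex cross term, and the (standard) fact that an affine map with unit Jacobian pulls $H^1$ back onto $H^1$.
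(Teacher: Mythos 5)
Your proposal is correct and follows essentially the same route as the paper: both use the composition operator $u\mapsto u\circ\Phi$ (unitary because $\det D\Phi=1$), the fact that it maps $H^1(\Omg_{c,d})$ bijectively onto $H^1(\Omg_{0,d})$, and the chain rule plus the substitution formula to identify the transformed Dirichlet form with $\frh_{c,d}$. The only cosmetic difference is that you apply the chain rule to $\psi\circ\Phi$ directly, whereas the paper writes the gradient of $u\circ\Phi^{-1}$ via $(D\Phi^{-1})^\top$; the resulting computation is identical.
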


\begin{proof}
	The Jacobian matrix of the mapping $\Phi$ can be computed explicitly and is given by
	\[
		D\Phi = \begin{pmatrix} 1 & \frac{c}{d}\\
			0 & 1\end{pmatrix}.
	\]
	The determinant of this matrix is equal to one. Therefore, the mapping 
	\[
		\sfV\colon L^2(\Omg_{c,d})\arr L^2(\Omg_{0,d}),\qquad \sfV u := u\circ\Phi
	\]
	is unitary. The self-adjoint operator $\sfV(-\Delta_{\rm N}^{\Omg_{c,d}})\sfV^{-1}$ in $L^2(\Omg_{0,d})$ corresponds	to the closed, non-negative, densely defined quadratic form
	\begin{equation}\label{eq:formPhi}
		H^1(\Omg_{0,d})\ni u \mapsto \int_{\Omega_{c,d}} |\nabla (u \circ \Phi^{-1})|^2 \dd x \dd y,
	\end{equation}
	where we implicitly have used that $\sfV$ maps $H^1(\Omg_{c,d})$ bijectively onto $H^1(\Omg_{0,d})$.
	It remains to verify that the quadratic form~\eqref{eq:formPhi} is equal
	to the form $\frh_{c,d}$ in~\eqref{eq:formcd}. In fact, by a direct computation using the chain rule, the substitution formula, and the fact that $\det D \Phi = 1$, we obtain
	\begin{align*}
	 \int_{\Omega_{c,d}} |\nabla (u \circ \Phi^{-1})|^2 \dd x \dd y & = \int_{\Omega_{c,d}} \left| (D \Phi^{-1})^\top (\nabla u) \circ \Phi^{-1} \right|^2 \dd x \dd y \\
	 & = \int_{\Omega_{0,d}} \left| (D \Phi^{-1})^\top \nabla u \right|^2 \dd x \dd y \\
	 & = \int_{\Omega_{0,d}} \left| \binom{\partial_{x} u}{- \frac{c}{d} \partial_{x} u + \partial_y u} \right|^2 \dd x \dd y,
	\end{align*}
    which leads to \eqref{eq:formcd}.
\end{proof}

The unitary equivalence stated in the above lemma and the classical min-max principle (see e.g.~\cite[Theorem~1.28]{FLW}) yield the following variational characterisation.

\begin{corollary}\label{cor:RR}
The eigenvalues of $- \Delta_{\rm N}^{\Omega_{c,d}}$ are given by
\begin{equation*}
	\mu_k(\Omg_{c,d}) = 
	\min_{\begin{smallmatrix} \cL \subset H^1(\Omg_{0,d})\\ \dim\cL = k \end{smallmatrix} } \max_{u\in\cL\sm\{0\}}\frac{\frh_{c,d}[u]}{\int_{\Omega_{0,d}} |u|^2 \dd x \dd y},\qquad k\in\dN,
\end{equation*}
where the minimum is taken over $k$-dimensional linear subspaces of $H^1(\Omg_{0,d})$.
\end{corollary}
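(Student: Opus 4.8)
The plan is to combine the unitary equivalence established in the preceding lemma with the standard min-max (Courant--Fischer) characterisation of the eigenvalues of a semibounded self-adjoint operator with purely discrete spectrum. Since the right-hand side of the claimed identity is precisely the min-max expression for the quadratic form $\frh_{c,d}$, the whole content of the corollary is to transport the eigenvalues of $-\Delta_{\rm N}^{\Omega_{c,d}}$ across the unitary map $\sfV$ and then apply the variational principle to the transformed operator $\sfH_{c,d}$.

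First I would recall from the lemma that $-\Delta_{\rm N}^{\Omega_{c,d}}$ and $\sfH_{c,d}$ are unitarily equivalent via $\sfV$. As unitary equivalence preserves the spectrum together with the multiplicities of eigenvalues, the eigenvalues of $\sfH_{c,d}$, counted with multiplicities, coincide with $0 = \mu_1(\Omega_{c,d}) < \mu_2(\Omega_{c,d}) \le \dots$. In particular, since $-\Delta_{\rm N}^{\Omega_{c,d}}$ has purely discrete spectrum (by compactness of the embedding $H^1(\Omega_{c,d}) \hookrightarrow L^2(\Omega_{c,d})$ noted above), the operator $\sfH_{c,d}$ inherits this property under $\sfV$, so that it has compact resolvent and the min-max principle is applicable.

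Next I would invoke the min-max principle (\cite[Theorem~1.28]{FLW}) for the self-adjoint operator $\sfH_{c,d}$, whose associated closed, non-negative, densely defined quadratic form is $\frh_{c,d}$ with form domain $H^1(\Omega_{0,d})$. This expresses the $k$-th eigenvalue of $\sfH_{c,d}$ as the minimum over $k$-dimensional subspaces $\cL \subset H^1(\Omega_{0,d})$ of the maximal Rayleigh quotient $\frh_{c,d}[u]/\int_{\Omega_{0,d}} |u|^2 \dd x \dd y$ over $u \in \cL \setminus \{0\}$. Combining this with the identification of the eigenvalues of $\sfH_{c,d}$ with $\mu_k(\Omega_{c,d})$ from the first step yields exactly the displayed formula.

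There is no genuine obstacle in this argument: the statement is a direct consequence of the lemma together with the textbook min-max principle. The only point requiring care is to confirm that the hypotheses of the min-max principle are met for $\sfH_{c,d}$ --- namely self-adjointness, semiboundedness (here non-negativity), and compactness of the resolvent --- all of which transfer verbatim from the corresponding properties of $-\Delta_{\rm N}^{\Omega_{c,d}}$ under the unitary $\sfV$, and to match the form domain and the $L^2(\Omega_{0,d})$-norm appearing in the Rayleigh quotient with those in the statement.
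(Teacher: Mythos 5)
Your proposal is correct and follows exactly the paper's own reasoning: the paper derives the corollary in one line by combining the unitary equivalence of $-\Delta_{\rm N}^{\Omega_{c,d}}$ with $\sfH_{c,d}$ (which preserves eigenvalues and discreteness of the spectrum) and the classical min-max principle applied to the form $\frh_{c,d}$ on $H^1(\Omega_{0,d})$. Your added care about verifying the hypotheses of the min-max principle is sound but does not constitute a different route.
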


\section{Application of the Rayleigh-Ritz principle}\label{sec:trial}

 

In this section we apply the Rayleigh--Ritz principle, based on Corollary \ref{cor:RR}, to appropriate trial functions. These trial functions are chosen with the goal to imply \eqref{eq:mu3} on the parallelogram $\Omega_{c,d}$ for large parts of the parameter space $c \geq 0, d > 0, c^2 + d^2 \leq 1$. The trial functions we are going to use are the normalized eigenfunctions of the Neumann Laplacian on the rectangle $\Omega_{0,d}$ corresponding to its first 5 eigenvalues. In Section \ref{sec:main} this will turn out useful to prove \eqref{eq:mu3} for a large region of points $(c, d)$ in the parameter space and specifically in a neighborhood of the parameter $(c, d) = (0, \tfrac{1}{2})$ corresponding to the optimizing domain.

We are going to express the following spectral bounds in terms of the eigenvalues of the two Hermitian matrices
\begin{align}\label{eq:LM}
	\sfL_{c,d} := \begin{pmatrix}
		\pi^2\left(1+\frac{c^2}{d^2}\right) & -\frac{8c}{d^2}\\
		-\frac{8c}{d^2} & \frac{\pi^2}{d^2}
		\end{pmatrix} \;\; \text{and} \;\;
	\sfM_{c,d} := \begin{pmatrix}
        \pi^2\left(1+\frac{c^2}{d^2}\right) +\frac{\pi^2}{d^2} &
				-\frac{32\sqrt{2}c}{3d^2}\\
		-\frac{32\sqrt{2}c}{3d^2} & 4\pi^2\left(1+\frac{c^2}{d^2}\right)		
		\end{pmatrix}.
\end{align}
For any Hermitian matrix $\sfM$ of size $N\times N$, where $N\in\dN$, we use the convention to denote its eigenvalues by $\lambda_1 (\sfM) \leq \lambda_2 (\sfM) \leq \dots$ with multiplicities taken into account.

\begin{proposition}\label{prop:LM}
	The inequality
	\[
		\mu_3(\Omg_{c,d}) \le \lm_2(\sfL_{c,d}\oplus \sfM_{c,d})
	\]
	holds for all $c \geq 0$ and all $d > 0$.
\end{proposition}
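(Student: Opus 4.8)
The plan is to apply the Rayleigh–Ritz principle from Corollary~\ref{cor:RR} with a cleverly chosen $2$-dimensional trial subspace $\cL \subset H^1(\Omg_{0,d})$. Since $\mu_3(\Omg_{c,d})$ is the $k=3$ min-max value, and $\mu_1 = 0$ with constant eigenfunction, the strategy is to build $\cL$ by adding to the constants a two-dimensional space spanned by nonconstant trial functions, so that the $\max$ over $\cL$ is controlled by the eigenvalues of a small matrix. More precisely, I expect the bound $\lm_2(\sfL_{c,d}\oplus \sfM_{c,d})$ to arise as follows: the four functions entering $\sfL_{c,d}$ and $\sfM_{c,d}$ are the normalized Neumann eigenfunctions of the rectangle $\Omg_{0,d}$ associated with its first few nonzero eigenvalues, namely $\cos(\pi x)$, $\cos(\tfrac{\pi y}{d})$, $\cos(\pi x)\cos(\tfrac{\pi y}{d})$, and $\cos(2\pi x)$ (up to normalization). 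These are orthogonal in $L^2(\Omg_{0,d})$ and orthogonal to the constants.

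The key computational step is to evaluate the form $\frh_{c,d}$ from~\eqref{eq:formcd} on pairs of these trial functions, producing the entries of two $2\times2$ Gram-type matrices with respect to the form. The diagonal entries come from $\frh_{c,d}[u]$ for each trial function; since the form has the structure $(1+\tfrac{c^2}{d^2})|\p_x u|^2 + |\p_y u|^2 - \tfrac{2c}{d}\Real(\p_x u\,\ov{\p_y u})$, the pure $|\p_x|^2$ and $|\p_y|^2$ integrals give the diagonal $\pi^2(1+\tfrac{c^2}{d^2})$, $\tfrac{\pi^2}{d^2}$, etc.\ (matching the diagonals of $\sfL_{c,d}$ and $\sfM_{c,d}$), while the cross term $-\tfrac{2c}{d}\Real(\p_x u\,\ov{\p_y v})$ produces the off-diagonal entries. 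The curious factors $-\tfrac{8c}{d^2}$ and $-\tfrac{32\sqrt2 c}{3d^2}$ must come from integrals such as $\int_0^1\int_0^d \p_x(\cos\pi x)\,\p_y(\cos\tfrac{\pi y}{d})\dd y\dd x$ against the product eigenfunctions, after dividing by the $L^2$-norms; the sign patterns and the splitting into two blocks $\sfL$ and $\sfM$ reflect which trial functions couple through the mixed derivative term. The block-diagonal structure $\sfL_{c,d}\oplus\sfM_{c,d}$ indicates that the four chosen functions split into two orthogonal pairs that do not interact under the form, so the matrix of $\frh_{c,d}$ restricted to $\spann$ of these four functions (relative to the $L^2$ Gram matrix, which is the identity after normalization) is exactly $\sfL_{c,d}\oplus\sfM_{c,d}$.

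To extract $\mu_3$, I would take the trial subspace $\cL$ to be the span of the constant function together with the \emph{two} trial functions realizing the eigenvector for $\lm_2(\sfL_{c,d}\oplus\sfM_{c,d})$, giving $\dim\cL = 3$; by the min-max characterization, $\mu_3(\Omg_{c,d})$ is bounded above by the maximum Rayleigh quotient over this $\cL$. On the constant the quotient is $0$, and on the remaining two-dimensional piece the quotients are exactly the eigenvalues of the $2\times2$ matrix obtained by restricting $\frh_{c,d}$ to the span of the two functions associated with $\lm_1$ and $\lm_2$ of $\sfL_{c,d}\oplus\sfM_{c,d}$; the largest of these three is $\lm_2(\sfL_{c,d}\oplus\sfM_{c,d})$, which yields the claimed bound.

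\emph{The main obstacle} will be the bookkeeping of the off-diagonal form integrals and the normalizations: one must verify that the mixed-derivative cross terms vanish between the ``wrong'' pairs (justifying the block structure $\sfL_{c,d}\oplus\sfM_{c,d}$) and evaluate the surviving integrals precisely to obtain the specific constants $\tfrac{8c}{d^2}$ and $\tfrac{32\sqrt2 c}{3d^2}$. A secondary subtlety is confirming that $\lm_2(\sfL_{c,d}\oplus\sfM_{c,d})$ is genuinely the correct quantity to place at the $\mu_3$ level --- that is, that exactly one trial eigenvalue falls at or below the $\mu_2$ threshold --- so that the $3$-dimensional subspace indeed certifies an upper bound on $\mu_3$ rather than on some other $\mu_k$; this should follow cleanly from the min-max principle once the matrix entries are in hand.
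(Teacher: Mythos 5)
Your proposal is correct and follows essentially the same route as the paper: the same five trial functions (the constant and the four lowest nonconstant Neumann eigenfunctions of the rectangle $\Omg_{0,d}$), the same computation of $\frh_{c,d}$ on them yielding the block structure $0\oplus\sfL_{c,d}\oplus\sfM_{c,d}$, and the same Rayleigh--Ritz mechanism. The only cosmetic difference is that the paper cites the abstract Rayleigh--Ritz inequality $\mu_k(\Omg_{c,d})\le\lm_k(\sfP\sfH_{c,d}\sfP)$ for the full five-dimensional projection, whereas you re-derive that inequality by applying the min-max principle to the three-dimensional subspace spanned by the constant and the two lowest eigenvectors of the $4\times4$ block (and your worry about locating $\mu_2$ is unnecessary: the min-max principle gives $\mu_3\le\max_{u\in\cL\sm\{0\}}\frh_{c,d}[u]/\|u\|^2$ for \emph{any} three-dimensional $\cL$, with no threshold condition needed).
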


\begin{proof}
	Let us consider the following five functions on
	the rectangle $\Omg_{0,d}$
	\[
	\begin{aligned}
		\psi_1(x,y) & = \frac{1}{\sqrt{d}},\quad
		&\psi_2(x,y) & = \sqrt{\frac{2}{d}}\cos(\pi x), \\
		\psi_3(x,y) & = \sqrt{\frac{2}{d}}\cos\left(\frac{\pi y}{d}\right), \quad
		& \psi_4(x,y) & = \frac{2}{\sqrt{d}}\cos(\pi x)\cos\left(\frac{\pi y}{d}\right), \\
		\psi_5(x,y) & = \sqrt{\frac{2}{d}}\cos(2\pi x).
	\end{aligned}	
	\]
	It is straightforward to see that this family of functions is orthonormal. Let $\sfP$ be the orthogonal projection in the Hilbert space $L^2(\Omg_{0,d})$
	onto the subspace $\cL:= {\rm span}\,\{\psi_1,\psi_2,\dots,\psi_5\}$ of $H^1(\Omg_{0,d})$. By~\cite[Corollary 1.32]{FLW} we have
	\begin{equation}\label{eq:Ray-Ritz}
		\mu_k(\Omg_{c,d}) \le \lm_k(\sfP\sfH_{c,d}\sfP) = \lambda_k (\sfA),\qquad 1\le k \le 5,
	\end{equation}
	where the operator $\sfP\sfH_{c,d}\sfP$ has been identified with a $5\times 5$ symmetric matrix $\sfA = (\aa_{ij})_{i,j=1}^5$ via 
	\begin{align*}
	 (\sfA \xi, \xi)_{\mathbb C^5} = \frh_{c,d}[\psi] \quad \text{if} \quad \psi = \sum_{i = 1}^5 \xi_i \psi_i
	\end{align*}
    for $\xi \in \mathbb C^5$. The entries of $\sfA$ are given by the formula
	\begin{equation*}
		\aa_{ij} := \frh_{c,d}[\psi_i,\psi_j],\qquad 1 \le i,j\le 5.
	\end{equation*}
	By a direct computation, we find that the diagonal entries of the matrix $\sfA$ are given by 
	\[
	\begin{aligned}
		\aa_{11} &= 0,\\
		\aa_{22} &=  \frac{2\pi^2}{d}\int_0^d
		\int_0^1\left(1+\frac{c^2}{d^2}\right)\sin^2(\pi x)\dd x\dd y  =
		\pi^2\left(1+\frac{c^2}{d^2}\right),\\
		\aa_{33} &= \frac{2\pi^2}{d^3}\int_0^d
		\int_0^1\sin^2\left(\frac{\pi y}{d}\right)\dd x\dd y  =
		\frac{\pi^2}{d^2},\\
		\aa_{44} & = \frac{4\pi^2}{d}\int_0^d
		\int_0^1
		\left[\left(1+\frac{c^2}{d^2}\right)\cos^2\left(\frac{\pi y}{d}\right)\sin^2(\pi x)+\frac{1}{d^2}\sin^2\left(\frac{\pi y}{d}\right)\cos^2(\pi x)\right]\dd x\dd y\\
		&  =\pi^2\left(1+\frac{c^2}{d^2}\right) +\frac{\pi^2}{d^2},\\
		\aa_{55} &= \frac{8 \pi^2}{d}\int_0^d
		\int_0^1\left(1+\frac{c^2}{d^2}\right)\sin^2(2\pi x)\dd x\dd y  = 4\pi^2\left(1+\frac{c^2}{d^2}\right).\\
	\end{aligned}
	\]	
	The non-zero off-diagonal entries can be computed as follows
	\[
	\begin{aligned}
		\aa_{23} = \aa_{32} & =  -\frac{2\pi^2c}{d^3}
		\int_0^d\int_0^1 \sin(\pi x) \sin\left(\frac{\pi y}{d}\right)\dd x \dd y = -\frac{8c}{d^2},\\
		\aa_{45} = \aa_{54} & = 
		\frac{2\sqrt{2}}{d}
		\int_0^d\int_0^1\bigg[
		\left(1+\frac{c^2}{d^2}\right)
		2\pi^2\sin\left(2\pi x\right)
		\sin(\pi x)\cos\left(\frac{\pi y}{d}\right)\\
		&\qquad\qquad\qquad\qquad -
		\frac{2\pi^2c}{d^2}
		\sin\left(2\pi x\right)
		\cos(\pi x)\sin\left(\frac{\pi y}{d}\right)
		\bigg]\! \dd x \dd y= -\frac{32\sqrt{2}c}{3d^2}.\\
	\end{aligned} 
	\]
	It is not hard to check that all the remaining entries of the matrix $\sfA$ are zero. From these computations we infer that the matrix $\sfA$ has the block-diagonal structure
	\[
		\sfA  = \begin{pmatrix} 0 & 0 & 0\\
			0 & \sfL_{c,d} &0\\
			0 & 0 & \sfM_{c,d}
	\end{pmatrix}
	\]
	with $\sfL_{c,d}$ and $\sfM_{c,d}$ given in \eqref{eq:LM}. The inequality~\eqref{eq:Ray-Ritz} yields that the matrix $\sfA$ is non-negative and that its kernel is of dimension one.
	Hence, we get from the above block-diagonal structure of $\sfA$ that $\lm_3(\sfA) = \lm_2(\sfL_{c,d} \oplus \sfM_{c,d})$ and the claim follows from~\eqref{eq:Ray-Ritz}.
\end{proof}

\section{Main result and its proof}\label{sec:main}

In this section we prove the main result of this note, Theorem \ref{thm:mu3} below. As a preparation we state an observation on the order of the eigenvalues of $\sfL_{c,d}$ and $\sfM_{c,d}$.

\begin{lemma}\label{lem:formEstimate}
For $c \geq 0$ and $d > 0$, the lowest eigenvalues of the matrices $\sfL_{c,d}$ and $\sfM_{c,d}$ satisfy
\begin{align*}
 \lambda_1 (\sfL_{c,d}) \leq \lambda_1 (\sfM_{c,d})
\end{align*}
and, in particular, 
\begin{align*}
 \lm_2(\sfL_{c,d}\oplus \sfM_{c,d}) = \min\{
		\lm_2(\sfL_{c,d}), \lm_1(\sfM_{c,d})\}.
\end{align*}
\end{lemma}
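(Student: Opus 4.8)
The plan is to treat the two assertions separately, obtaining the displayed identity for $\lm_2(\sfL_{c,d}\oplus\sfM_{c,d})$ as a formal consequence of the eigenvalue inequality $\lambda_1(\sfL_{c,d})\le\lambda_1(\sfM_{c,d})$. For the inequality I abbreviate $p:=\pi^2(1+c^2/d^2)$, $q:=\pi^2/d^2$ and $\beta:=64c^2/d^4$, so that $\sfL_{c,d}$ has diagonal $(p,q)$ and squared off-diagonal entry $\beta$, while $\sfM_{c,d}$ has diagonal $(p+q,\,4p)$ and squared off-diagonal entry $\tfrac{32}{9}\beta$ (the ratio $|(\sfM_{c,d})_{12}|^2/|(\sfL_{c,d})_{12}|^2 = \tfrac{2048/9}{64}=\tfrac{32}{9}$). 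Setting $t:=\lambda_1(\sfL_{c,d})$, I would prove $\lambda_1(\sfM_{c,d})\ge t$ by showing that the Hermitian $2\times2$ matrix $\sfM_{c,d}-tI$ is positive semidefinite, for which it is necessary and sufficient that both its trace and its determinant be nonnegative.

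The trace condition is immediate: since $t=\lambda_1(\sfL_{c,d})\le\tfrac12(p+q)$ does not exceed the average of the diagonal entries of $\sfL_{c,d}$, we get $\mathrm{tr}(\sfM_{c,d}-tI)=5p+q-2t\ge 4p>0$. The determinant is the crux. Because $t$ is an eigenvalue of $\sfL_{c,d}$ we have $\det(\sfL_{c,d}-tI)=0$, i.e.\ $\beta=(p-t)(q-t)$; substituting this into $\det(\sfM_{c,d}-tI)=(p+q-t)(4p-t)-\tfrac{32}{9}\beta$ turns the determinant into a polynomial $f(t)$ in $t$ with coefficients in $p,q$ that, after clearing denominators, reads $9f(t)=-23t^2+(23q-13p)t+(36p^2+4pq)$. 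The goal is then to show $f(t)\ge 0$.

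The key observation is that $f$ is a concave (downward) parabola in $t$, so it suffices to verify $f\ge 0$ at the two endpoints of any interval known to contain $t$. Since $\sfL_{c,d}$ is positive definite (its determinant equals $\pi^4/d^2+(\pi^4-64)c^2/d^4>0$) and its smallest eigenvalue is at most its smallest diagonal entry, one has $0<t\le\min\{p,q\}$. At the left endpoint $9f(0)=36p^2+4pq>0$. At the right endpoint one computes $9f(p)=27pq$ and $9f(q)=9p(4p-q)$; in the case $p\le q$ the relevant value is $9f(p)=27pq\ge0$, while in the case $q<p$ it is $9f(q)=9p(4p-q)>0$ since then $q<p<4p$. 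Thus $f$ is nonnegative at both $0$ and $\min\{p,q\}$, and concavity gives $f\ge0$ on the whole segment, in particular at $t$. Hence $\det(\sfM_{c,d}-tI)\ge0$, the matrix $\sfM_{c,d}-tI$ is positive semidefinite, and $\lambda_1(\sfM_{c,d})\ge t=\lambda_1(\sfL_{c,d})$.

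Finally, the spectrum of $\sfL_{c,d}\oplus\sfM_{c,d}$ is the union, counted with multiplicity, of the spectra of the two blocks. By the inequality just proved, $\lambda_1(\sfL_{c,d})$ is the smallest of the four numbers $\lambda_1(\sfL_{c,d}),\lambda_2(\sfL_{c,d}),\lambda_1(\sfM_{c,d}),\lambda_2(\sfM_{c,d})$, so it equals $\lambda_1(\sfL_{c,d}\oplus\sfM_{c,d})$; removing it leaves $\lm_2(\sfL_{c,d}\oplus\sfM_{c,d})=\min\{\lambda_2(\sfL_{c,d}),\lambda_1(\sfM_{c,d}),\lambda_2(\sfM_{c,d})\}=\min\{\lambda_2(\sfL_{c,d}),\lambda_1(\sfM_{c,d})\}$, using $\lambda_1(\sfM_{c,d})\le\lambda_2(\sfM_{c,d})$. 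I expect the determinant computation and its reduction to a concave one-variable polynomial, together with the endpoint case distinction between $p\le q$ and $q<p$, to be the only real obstacle; everything else is routine $2\times2$ linear algebra.
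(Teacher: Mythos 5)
Your proof is correct, and it takes a genuinely different route from the paper's. The paper compares both matrices to a common intermediate value: the trial vector $(1,0)^\top$ gives $\lambda_1(\sfL_{c,d}) \le \pi^2(1+\tfrac{c^2}{d^2})$, and then Young's inequality $2\Real(\xi_1\ov{\xi_2}) \le \tfrac1t|\xi_1|^2 + t|\xi_2|^2$ with the specific weight $t = \tfrac{32\sqrt2 c}{3\pi^2}$ absorbs the off-diagonal term of $\sfM_{c,d}$ to yield $(\sfM_{c,d}\xi,\xi)_{\dC^2} \ge \pi^2(1+\tfrac{c^2}{d^2})\|\xi\|^2$, so that $\lambda_1(\sfL_{c,d}) \le \pi^2(1+\tfrac{c^2}{d^2}) \le \lambda_1(\sfM_{c,d})$. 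You instead compare the matrices directly: with $t=\lambda_1(\sfL_{c,d})$ you establish $\sfM_{c,d}-tI \succeq 0$ via the trace/determinant criterion for $2\times 2$ Hermitian matrices, use the characteristic equation $\beta=(p-t)(q-t)$ to eliminate the off-diagonal datum, and exploit concavity of the resulting quadratic $f$ to reduce everything to endpoint evaluations on $[0,\min\{p,q\}]$ (your values $9f(0)=36p^2+4pq$, $9f(p)=27pq$, $9f(q)=9p(4p-q)$ all check out, as does the localization $0<t\le\min\{p,q\}$, though strictly you should add that positive definiteness of $\sfL_{c,d}$ uses its positive diagonal along with $\det\sfL_{c,d}>0$, a triviality). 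The paper's argument is shorter but hinges on guessing both the pivot value and the Young weight; yours is more mechanical and systematic — it needs no intermediate value and would transfer to other pairs of $2\times2$ Hermitian matrices for which the analogous endpoint checks succeed — at the cost of a longer polynomial computation. Your deduction of the displayed identity for $\lm_2(\sfL_{c,d}\oplus\sfM_{c,d})$ from the eigenvalue inequality is the same (routine) bookkeeping the paper leaves implicit, handled correctly with multiplicities.
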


\begin{proof}
Using the trial vector $(1,0)^\top$ we get that
	\[
		\lm_1(\sfL_{c,d}) \le \pi^2\left(1+\frac{c^2}{d^2}\right).
	\]
	The quadratic form of the matrix $\sfM_{c,d}$ evaluated on the vector $\xi = (\xi_1,\xi_2)^\top\in\dC^2$ can be estimated from below using $2 \Real (\xi_1 \overline \xi_2) \leq \frac{1}{t} |\xi_1|^2 + t |\xi_2|^2$ for $t > 0$, applied to $t = \frac{32 \sqrt{2} c}{3 \pi^2}$, giving
	\[
	\begin{aligned}
		(\sfM_{c,d}\xi,\xi)_{\dC^2} &=  \left[\pi^2\left(1+\frac{c^2}{d^2}\right) + \frac{\pi^2}{d^2}\right]|\xi_1|^2 + 4\pi^2\left(1+\frac{c^2}{d^2}\right)|\xi_2|^2  - \frac{64\sqrt{2} c}{3 d^2}\Real \left(\xi_1\ov{\xi_2}\right)\\
		&\ge \pi^2\left(1+\frac{c^2}{d^2}\right) |\xi_1|^2 + \left[ 4\pi^2\left( 1+\frac{c^2}{d^2}\right) -\frac{2048}{9\pi^2} \frac{c^2}{d^2}\right]|\xi_2|^2\\
		&\ge \pi^2\left(1+\frac{c^2}{d^2}\right) \big(|\xi_1|^2+|\xi_2|^2\big).
	\end{aligned}	
	\]
	This bound combined with the min-max principle yields that
	$\lm_1(\sfL_{c,d})\le \lm_1(\sfM_{c,d})$.
\end{proof}

Note that the eigenvalues of $\sfL_{c,d}$ and $\sfM_{c,d}$ can be computed explicitly, which we will use. We have
\begin{align}\label{eq:lambdaL}
 \lambda_2 (\sfL_{c,d}) = \frac{\pi^2 (c^2 + d^2 + 1) + \sqrt{\pi^4 (c^2 + d^2 - 1)^2 + 256 c^2}}{2 d^2}
\end{align}
and
\begin{align}\label{eq:lambdaM}
 \lambda_1 (\sfM_{c,d}) = \frac{3 \pi^2 (5 c^2 + 5 d^2 + 1) - \sqrt{9 \pi^4 (3 c^2 + 3 d^2 - 1)^2 + 8192 c^2}}{6 d^2}.
\end{align}

Next, let us recall a bound obtained by Kr\"oger, formulated here in the special case of dimension 2 and the third Neumann Laplacian eigenvalue.

\begin{theorem}{{\rm(\cite[Theorem 1]{K99})}}
	\label{thm:Kroger}
	Let $\Omg\subset\dR^2$ be a bounded, convex domain with diameter $d_\Omg$. Then
	\[
		\mu_3(\Omg) \le \left(\frac{2j_{0,1} + \pi}{d_\Omg}\right)^2
	\]
	holds, where $j_{0,1}\approx 2.4048$ is the first zero of the Bessel function $J_0$.
\end{theorem}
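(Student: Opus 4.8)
The plan is to bound $\mu_3(\Omega)$ from above by reducing the two-dimensional Neumann problem to a one-dimensional weighted eigenvalue problem along the direction of the diameter, and then to estimate the latter with explicit trial functions. First I would fix a chord realizing the diameter and choose Cartesian coordinates so that it runs along the $x$-axis from $(0,0)$ to $(d_\Omega,0)$. Since the projection of a convex set onto any line has length at most its diameter, the orthogonal projection of $\Omega$ onto the $x$-axis is the interval $[0,d_\Omega]$. Writing each vertical slice as $\{y:(x,y)\in\Omega\}=(y_-(x),y_+(x))$ with $y_+$ concave and $y_-$ convex, the width $\rho(x):=y_+(x)-y_-(x)$ is a nonnegative concave function on $[0,d_\Omega]$ with $\rho(0)=\rho(d_\Omega)=0$. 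For any $f\in H^1(0,d_\Omega)$ the function $u(x,y):=f(x)$ lies in $H^1(\Omega)$ (no boundary condition is needed, as the Neumann problem has form domain $H^1(\Omega)$), and by Fubini its Rayleigh quotient $R(u)=\int_\Omega|\nabla u|^2\,/\,\int_\Omega|u|^2$ equals the weighted one-dimensional quotient $\int_0^{d_\Omega} f'(x)^2\rho(x)\,dx\,\big/\,\int_0^{d_\Omega} f(x)^2\rho(x)\,dx$. It therefore suffices to produce a suitable three-dimensional trial space of functions of $x$ alone.

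Next I would use the disjoint-support form of the min-max principle: if $g_1,g_2,g_3\in H^1(\Omega)$ have pairwise disjoint supports, then they are orthogonal, their gradients are supported disjointly, and every $u$ in their span satisfies $R(u)\le\max_i R(g_i)$, whence $\mu_3(\Omega)\le\max_i R(g_i)$. I would split $[0,d_\Omega]$ into three consecutive subintervals $[0,a]$, $[a,b]$, $[b,d_\Omega]$ and attach to each a function of $x$ vanishing at the interior breakpoints. On the two outer slabs, where $\rho$ degenerates at the diameter endpoints, the model weight is linear, $\rho(x)\sim x$, so the equation $-(\rho f')'=\Lambda^2\rho f$ becomes Bessel's equation of order zero and the solution regular at the endpoint is $f(x)=J_0(\Lambda x)$; taking $a=j_{0,1}/\Lambda$ makes it vanish at $x=a$ and gives Rayleigh quotient $\Lambda^2$ for the linear weight, and symmetrically near $x=d_\Omega$. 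On the middle slab, where $\rho$ does not degenerate, I would use $\sin\!\big(\pi(x-a)/(b-a)\big)$, which vanishes at both breakpoints and has quotient $(\pi/(b-a))^2$ for a constant weight.

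Balancing the three quotients, i.e.\ requiring $j_{0,1}/a=\pi/(b-a)=j_{0,1}/(d_\Omega-b)=\Lambda$, forces $a+(b-a)+(d_\Omega-b)=(2j_{0,1}+\pi)/\Lambda=d_\Omega$, so that $\Lambda=(2j_{0,1}+\pi)/d_\Omega$ and the claimed bound $\mu_3(\Omega)\le\Lambda^2$ results.

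The main obstacle, and the genuinely delicate point, is that $\rho$ is a general concave weight rather than the idealized linear or constant model on each slab, so one cannot simply read off the quotients above. What is really needed is an extremality statement for the first eigenvalue of the relevant Sturm--Liouville problem as a functional of the weight: among all nonnegative concave weights on a slab of length $\ell$ vanishing at a degenerate endpoint, the first mixed eigenvalue (natural at the degenerate end, Dirichlet at the breakpoint) is maximized by the linear weight with value $(j_{0,1}/\ell)^2$; and on an interior slab the first Dirichlet eigenvalue is maximized by the constant weight with value $(\pi/\ell)^2$. I expect this monotonicity/extremality with respect to the weight to be the crux, most cleanly obtained by reducing the extremal weight to a piecewise-linear one and then invoking the explicit Bessel and trigonometric computations. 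Once it is in place, the fixed balanced partition (depending only on $d_\Omega$) makes all three pieces satisfy $R(g_i)\le\Lambda^2$, and the disjoint-support estimate closes the argument.
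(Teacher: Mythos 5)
A preliminary remark on the comparison you were asked for: the paper does not prove Theorem~\ref{thm:Kroger} at all --- it imports it as a black box from Kr\"oger's paper \cite{K99} --- so your attempt has to be measured against Kr\"oger's original argument. Your outline in fact reproduces that argument's skeleton faithfully: slicing orthogonally to a diameter, the concave cross-section width $\rho$ with $\rho(0)=\rho(d_\Omega)=0$, the Fubini reduction of the Rayleigh quotient of $u(x,y)=f(x)$ to a weighted one-dimensional quotient, three disjointly supported trial functions (Bessel profile $J_0(\Lambda x)$ on the two end slabs, a sine on the middle slab), and the balanced partition $j_{0,1}/a=\pi/(b-a)=j_{0,1}/(d_\Omega-b)=\Lambda=(2j_{0,1}+\pi)/d_\Omega$. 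All of these reductions are correct, including the disjoint-support min--max bound $\mu_3(\Omega)\le\max_i R(g_i)$.

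The genuine gap is the step you yourself flag as the crux and then leave unproved: that for the \emph{actual} concave weight $\rho$ the first eigenvalue on each slab does not exceed its value for the model (linear, respectively constant) weight. As written, your argument establishes the bound only for the model weights, and the route you propose for the general case --- ``reducing the extremal weight to a piecewise-linear one'' --- is not a viable shortcut: it presupposes existence and a characterization of a weight maximizing the slab eigenvalue, a nontrivial variational problem in its own right that you nowhere address. The step is instead closed directly (and this is how Kr\"oger does it) by testing the general weight with the \emph{model} eigenfunction and a one-sign-change comparison. On an end slab $[0,a]$ take $f(x)=J_0(\Lambda x)$ with $\Lambda=j_{0,1}/a$ and set $q:=f'^2-\Lambda^2 f^2=\Lambda^2\bigl(J_1(\Lambda x)^2-J_0(\Lambda x)^2\bigr)$. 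Bessel's equation $(xf')'=-\Lambda^2 xf$ and $f(a)=0$ give $\int_0^a x\,q\,\dd x=0$; the function $q$ is negative near $0$, positive near $a$, and changes sign exactly once, say at $x_0$ (since $J_1/J_0$ is strictly increasing on $(0,j_{0,1})$); and concavity of $\rho$ together with $\rho(0)=0$ makes $w(x):=\rho(x)/x$ non-increasing. Hence
\[
	\int_0^a \rho\,q\,\dd x=\int_0^a w(x)\,x q(x)\,\dd x\le w(x_0)\int_0^a x q(x)\,\dd x=0,
\]
so the Rayleigh quotient of $f$ with weight $\rho$ is at most $\Lambda^2=(j_{0,1}/a)^2$. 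On the middle slab of length $\ell=b-a$, the trial function $\sin(\pi(x-a)/\ell)$ has quotient at most $(\pi/\ell)^2$ provided $\int_0^{\ell}\rho(a+s)\cos(2\pi s/\ell)\,\dd s\le 0$, which follows after one integration by parts from
\[
	\int_0^{\ell}\rho'(a+s)\sin(2\pi s/\ell)\,\dd s
	=\int_0^{\ell/2}\bigl(\rho'(a+s)-\rho'(a+s+\ell/2)\bigr)\sin(2\pi s/\ell)\,\dd s\ge 0,
\]
valid because $\rho'$ is non-increasing. These two elementary inequalities are exactly what is missing from your writeup; once they are inserted, your construction yields $\mu_3(\Omega)\le\Lambda^2=\bigl((2j_{0,1}+\pi)/d_\Omega\bigr)^2$, and the resulting proof is essentially Kr\"oger's.
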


The perimeter and the diameter of the parallelogram $\Omg_{c,d}$ can be computed by the formulae
\[
	|\p\Omg_{c,d}| = 2\left(1+\sqrt{c^2+d^2}\right),\qquad d_{\Omg_{c,d}} = \sqrt{(1+c)^2+d^2}.
\]
Thus, the bound in Theorem~\ref{thm:Kroger} in the special case of parallelograms reads as
\begin{equation*}
	\mu_3(\Omg_{c,d}) \le \frac{(2j_{0,1}+\pi)^2}{(1+c)^2+d^2}.
\end{equation*}

Now the bound~\eqref{eq:mu3} for parallelograms follows from the following theorem; cf.\ Proposition \ref{prop:LM}.

\begin{theorem}[Main result]\label{thm:mu3}
	For any $c \geq 0$ and $d > 0$ with $c^2 + d^2 \leq 1$ we have
	\[
		\min\left\{\lm_2(\sfL_{c,d}\oplus \sfM_{c,d}),\frac{(2j_{0,1}+\pi)^2}{(1+c)^2+d^2}\right\}\le\frac{9\pi^2}{(1+\sqrt{c^2+d^2})^2},
	\]
	and, in particular, 
	\[ 
	|\p\Omg_{c,d}|^2\mu_3(\Omg_{c,d}) \le 36\pi^2.
	\]
	Both inequalities are strict if $(c,d)\ne (0,\frac12)$.
\end{theorem}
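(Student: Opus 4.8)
The plan is to combine the three upper bounds on $\mu_3(\Omega_{c,d})$ that are already available and then to verify the resulting elementary inequality by a case distinction in $(c,d)$. Proposition~\ref{prop:LM} together with Lemma~\ref{lem:formEstimate} gives
\[
\mu_3(\Omega_{c,d}) \le \lambda_2(\sfL_{c,d}\oplus\sfM_{c,d}) = \min\{\lambda_2(\sfL_{c,d}),\lambda_1(\sfM_{c,d})\},
\]
while Theorem~\ref{thm:Kroger} yields $\mu_3(\Omega_{c,d}) \le (2j_{0,1}+\pi)^2/((1+c)^2+d^2)$. Hence it suffices to prove the first displayed inequality of the theorem, i.e.\ that at least one of the three quantities $\lambda_2(\sfL_{c,d})$, $\lambda_1(\sfM_{c,d})$, $(2j_{0,1}+\pi)^2/((1+c)^2+d^2)$ does not exceed the target $9\pi^2/(1+\sqrt{c^2+d^2})^2$; the second, scaling-invariant inequality \eqref{eq:mu3} then follows immediately from $|\partial\Omega_{c,d}|^2 = 4(1+\sqrt{c^2+d^2})^2$. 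For the first two quantities I would work throughout with the closed-form expressions \eqref{eq:lambdaL} and \eqref{eq:lambdaM}.

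Next I would partition the admissible quarter-disk $\{c\ge 0,\ d>0,\ c^2+d^2\le 1\}$ into four regions and assign one bound to each. Writing $r=\sqrt{c^2+d^2}$, the target $9\pi^2/(1+r)^2$ is constant along each circular arc $r=\mathrm{const}$, and along such an arc $\lambda_2(\sfL_{c,d})$ is increasing in $c$ (its numerator grows and its denominator $2d^2=2(r^2-c^2)$ shrinks) while the Kröger bound is decreasing in $c$; this already localizes where each bound is effective. Concretely, $\lambda_2(\sfL_{c,d})$ is the operative bound for $c$ small and $d$ not too small — it is exactly tight at $(c,d)=(0,\tfrac12)$ and, since it exceeds the target already on the $d$-axis for $r<\tfrac12$, it is useless on the central disk $r<\tfrac12$ — whereas the Kröger bound takes over once $c$ (equivalently the diameter) is large. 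The remaining two regions are handled by $\lambda_1(\sfM_{c,d})$, which is also tight at $(0,\tfrac12)$: it covers the central disk $r<\tfrac12$ near the $d$-axis, and it covers the intermediate band that separates the $\sfL$-region from the Kröger region. Thus three regions are treated by the trial-function bounds $\lambda_2(\sfL_{c,d})$ and $\lambda_1(\sfM_{c,d})$ and the fourth by Kröger. Since these natural regions overlap, I would fix a concrete disjoint partition with explicit boundary curves (e.g.\ $r=\tfrac12$ together with suitable auxiliary curves in $c$) and verify the designated inequality on each piece.

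In each of the three trial-function regions the inequality to be checked has the shape $(\text{rational}) \pm \sqrt{(\text{polynomial})} \le (\text{rational})$. After isolating and squaring the square root it reduces to a polynomial inequality in $c$ and $d$, established with the help of the constraint $c^2+d^2\le 1$; in the Kröger region the inequality is already rational and uses only $j_{0,1}\approx 2.4048$, in particular the clean fact $2j_{0,1}+\pi<3\pi$, so that $(2j_{0,1}+\pi)^2<9\pi^2$. For the strictness statement I would carry the equality cases through every region: an equality $\mu_3(\Omega_{c,d})|\partial\Omega_{c,d}|^2=36\pi^2$ would force one of the three bounds to equal the target while the other two are no smaller, and the regional computations are set up so that this occurs only at $(c,d)=(0,\tfrac12)$, where $\lambda_2(\sfL_{c,d})=\lambda_1(\sfM_{c,d})=4\pi^2$ coincide with the target $4\pi^2$ (while Kröger is strictly larger there).

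The main obstacle is the bookkeeping near the optimizer $(0,\tfrac12)$: there both matrix bounds are simultaneously tight, so the region boundaries must be placed so that on each side of them the active bound still has the correct sign — non-strict only at $(0,\tfrac12)$ and strict elsewhere — and one must be sure the four regions leave no gap. The risk of a gap is genuine: for intermediate radii (around $r\approx 0.55$) the $\sfL$-region and the Kröger region do not meet, and it is precisely the $\sfM$-bound that must fill the intervening band, so verifying coverage is the real crux rather than any single inequality. Once the partition is fixed, the individual polynomial inequalities are lengthy but routine.
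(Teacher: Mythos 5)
Your plan is essentially the paper's own proof: it deploys the same three bounds ($\lm_2(\sfL_{c,d})$ via \eqref{eq:lambdaL}, $\lm_1(\sfM_{c,d})$ via \eqref{eq:lambdaM}, and Kr\"oger's Theorem~\ref{thm:Kroger}) over the same four-region decomposition of the quarter-disk (Kr\"oger where $c$ is comparatively large, the $\sfL$-bound for $r>\tfrac12$ near the $d$-axis, and the $\sfM$-bound both on the central disk $r\le\tfrac12$ and on the intermediate band), with the coverage check you correctly identify as the crux appearing as the paper's Step~5. The only caveat is that the step you dismiss as ``lengthy but routine'' is where the paper works hardest: it first reduces the two-variable $\sfM$-inequalities to one-variable ones by exploiting convexity of $g(r,t)$ in $t=\cos^2\theta$ (see \eqref{eq:g}), and even the resulting univariate inequalities (Lemmata~\ref{lem:schrecklich}--\ref{lem:schrecklich4}) require non-routine arguments, such as the min-max/integral-ratio trick used to prove positivity of $G$ in Lemma~\ref{lem:schrecklich2} and the derivative-sign analysis in Lemma~\ref{lem:schrecklich4}.
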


\begin{proof}
Throughout this proof we will mostly work with polar coordinates
\begin{align*}
 c = r \cos \theta, \quad d = r \sin \theta, \qquad 0 < r \leq 1, \quad 0 < \theta \leq \frac{\pi}{2}.
\end{align*}
In these coordinates, our aim is to prove that
\begin{align}\label{eq:greatminimum}
 \min\left\{\lm_2(\sfL_{r \cos \theta, r \sin \theta}\oplus \sfM_{r \cos \theta, r \sin \theta}),\frac{(2j_{0,1}+\pi)^2}{1 + 2 r \cos \theta + r^2}\right\}<\frac{9\pi^2}{(1+r)^2}
\end{align}
holds for all $r \in (0, 1], \theta \in (0, \frac{\pi}{2}]$ such that $(r,\tt)\ne (\frac12,\frac{\pi}{2})$.

{\it Step 1: Small angles.} For
\begin{align*}
 c_\star := \frac{2(2j_{0,1}+\pi)^2}{9\pi^2}-1 \approx 0.4235
\end{align*}
consider the region
\begin{align*}
 R_1 := \left\{ (r, \theta) \in (0,1] \times (0, \tfrac{\pi}{2}] : \cos \theta > c_\star \right\}.
\end{align*}
Since 
\begin{align*}
 \frac{(1 + r)^2}{18 \pi^2 r} (2 j_{0,1} + \pi)^2 - \frac{r + \frac{1}{r}}{2} & = \left(\frac{(2j_{0,1}+\pi)^2}{18\pi^2}-\frac12\right) \left(r+\frac{1}{r}\right) + \frac{(2j_{0,1}+\pi)^2}{9\pi^2}
\end{align*}
is strictly increasing as a function of $r \in (0, 1]$, we have
\begin{align*}
 \frac{(1 + r)^2}{18 \pi^2 r} (2 j_{0,1} + \pi)^2 - \frac{r + \frac{1}{r}}{2} \le c_\star < \cos \theta
\end{align*}
for all $(r, \theta) \in R_1$, which is equivalent to
\begin{align*}
 \frac{(2j_{0,1}+\pi)^2}{1 + 2 r \cos \theta + r^2} < \frac{9\pi^2}{(1+r)^2},
\end{align*}
and, thus, implies \eqref{eq:greatminimum} whenever $(r, \theta) \in R_1$.

{\it Step 2: Large angles and radii.}
Consider the region
	\[
	R_2 := \left\{(r,\tt)\in (0,1]\times (0,\tfrac{\pi}{2}] \colon r\in( \tfrac12,1),\cos^2 \theta <  1 - \frac{(r^2+\frac34)^2}{4 r^2}, \cos\theta \le c_\star \right\}.
	\] 
We will show
\begin{align}\label{eq:finalResultStep2}
 \lm_2(\sfL_{r\cos\tt,r\sin\tt}) = \frac{\pi ^2\left(r^2+1\right)+\sqrt{\pi ^4(r^2-1)^2+256r^2\cos^2\tt}}{2 r^2\sin^2\tt} < \frac{9 \pi^2}{(1 + r)^2}
\end{align}
for all $(r, \theta) \in R_2$, cf.\ \eqref{eq:lambdaL}. First, note that $\cos^2 \theta < 1 - \frac{(r^2+\frac34)^2}{4 r^2}$  implies $\sin^2 \tt > \frac{(r^2+\frac34)^2}{4 r^2}$. Hence,
\begin{align*}
 \lm_2(\sfL_{r\cos\tt,r\sin\tt}) < \frac{\pi^2 (r^2 + 1) + \sqrt{\pi^4 (r^2 - 1)^2 + 160 r^2 - 64 r^4 - 36}}{\frac{1}{2} (r^2 + \frac{3}{4})^2}, \quad (r, \theta) \in R_2,
\end{align*}
and, thus, in order to obtain the inequality \eqref{eq:finalResultStep2} it is sufficient to prove
	\[
		\frac{9\pi^2 \big(r^2+\frac34 \big)^2}{(1 + r)^2} - 2 \pi^2(r^2+1) > 2 \sqrt{\pi^4(r^2-1)^2+160r^2-64r^4-36}
	\]
for all $r \in (\frac12,1)$. This is done in Lemma \ref{lem:schrecklich}.

{\it Step 3: Intermediate angles and radii.} Let us now consider the region
	\[
	R_3 := \left\{(r,\tt)\in (0,1]\times(0,\tfrac{\pi}{2}]\colon \frac12< r \le r_\star, 1 - \frac{(r^2+\frac34)^2}{4 r^2} \leq \cos^2 \tt \le c_\star^2 \right\},
	\]
	where $r_\star := \sqrt{1-c_\star^2}-\sqrt{\frac14-c_\star^2} \approx 0.6401$. Our aim is to prove $\lambda_1 (\sfM_{r \cos \tt,r \sin \tt}) <\frac{9 \pi^2}{(1+r)^2}$ in this region. Let us make the substitution $t :=\cos^2\tt$ in the expression for $\lm_1(\sfM_{r\cos\tt,r\sin\tt})$, see \eqref{eq:lambdaM}, and consider the function
	\begin{align}\label{eq:g}
		g(r,t) := 3\pi^2(5r^2+1)-\sqrt{9\pi^4(3r^2-1)^2+8192r^2t} - \frac{54\pi^2r^2}{(1+r)^2}(1-t).
	\end{align}
	In the described region, the first entry of the minimum in~\eqref{eq:greatminimum} is smaller than the right hand side if the inequality
	\[
		g(r,\cos^2\tt) < 0,\qquad r\in\left(\tfrac12,r_\star\right],
	\]
	holds. By computing the second derivative with respect to $t$ we infer that for any fixed $r\in(0,1)$ the function $(0,1)\ni t\mapsto g(r,t)$ is strictly convex. Hence, for any $t_1 < t < t_2$ we have $g(r,t) \le \max\{g(r,t_1),g(r,t_2)\}$. Thus for \eqref{eq:g} it suffices to check that
	\[
		\max\left\{g(r,c_\star^2), g\left(r,1-\frac{(r^2+\frac34)^2}{4r^2}\right)\right\} < 0,\qquad r\in\left(\tfrac12,r_\star\right].
	\]
	This inequality follows from Lemmata~\ref{lem:schrecklich2} and~\ref{lem:schrecklich4} in Appendix~\ref{app}.

{\it Step 4: large angles and small radii.}
	Let us now consider the region 
	\[
	R_4 := \left\{(r,\tt)\in (0,1]\times (0,\tfrac{\pi}{2}]\colon r \le \frac12, \cos\tt \le c_\star\right\}
	\]
	Also here we want to prove $\lambda_1 (\sfM_{c,d}) < \frac{9 \pi^2}{(1+r)^2}$ provided that $(r,\tt)\ne (\frac12,\frac{\pi}{2})$ and will use the function $g$ defined in \eqref{eq:g}. By strict convexity of $g$ with respect to $t$ in order to verify that $g(r,\cos^2\tt) < 0$ for all $(r,\tt)\in R_4\setminus \{(\frac12,\frac{\pi}{2})\}$ it is sufficient to check that
	\[
	\max\{g(r,c_\star^2),g(r,0)\} \le 0,\qquad r\in \left(0,\tfrac12\right],
	\]
	the inequality being strict if $r\ne\frac12$.
	This inequality follows from Lemma~\ref{lem:schrecklich2} and Lemma~\ref{lem:schrecklich3} in Appendix~\ref{app}.
	
	{\it Step 5: final step.} It remains to notice that the union of $R_1, \dots, R_4$ covers the quarter of the unit disk $(r,\tt)\in (0,1]\times(0,\frac{\pi}{2}]$ without holes (see Figure~\ref{fig2}). For this, note that for all $r \in (r_\star, 1]$ we have $1 - \frac{(r^2 + \frac{3}{4})^2}{4 r^2} > 1 - \frac{(r_\star^2 + \frac{3}{4})^2}{4 r_\star^2} = c_\star^2$. 
	\begin{figure}[h]
		\begin{tikzpicture}
		\draw (0,0) node[inner sep=0] {\includegraphics[width=8cm]{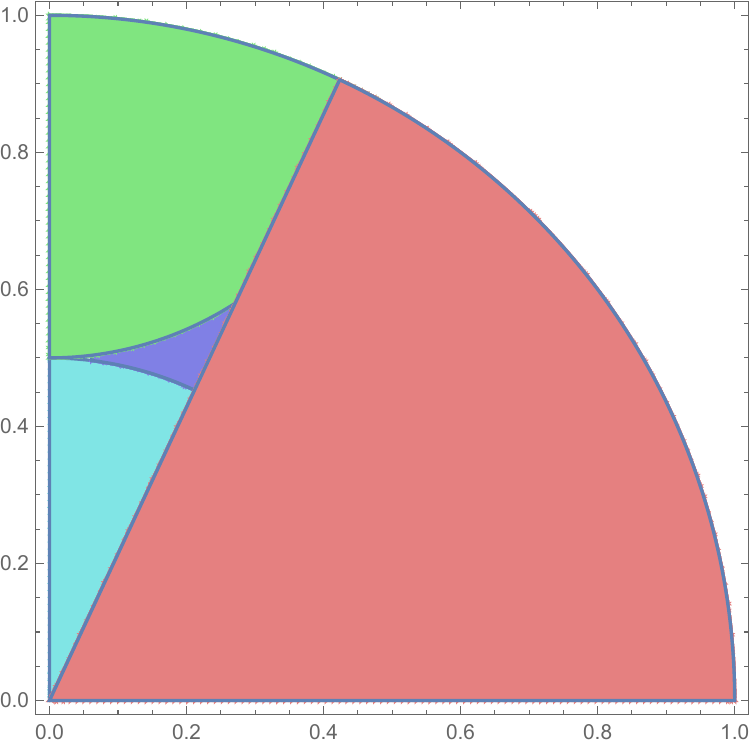}};
		\draw (0,0) node {$R_1$};
		\draw (-3,-1) node {$R_4$};
		\draw (-2.2,0.1) node {$R_3$};
		\draw (-2.4,1.8) node {$R_2$};
		\end{tikzpicture}
	\caption{The regions $R_1,R_2,R_3$, and $R_4$.}
	\label{fig2}
	\end{figure}
\end{proof}

\subsection*{Acknowledgements}
VL is grateful to Stockholm University for the possibility to have a research stay in June 2024, where this project was initiated. JR acknowledges financial support by the grant no.\ 2022-03342 of the Swedish Research Council (VR).

\appendix

\section{Auxiliary inequalities}\label{app}

In this appendix, we collect some auxiliary inequalities which are used in the proof of the main result, Theorem~\ref{thm:mu3}.

\begin{lemma}\label{lem:schrecklich}
The inequality
\begin{align}\label{eq:ungetuem}
 \frac{9\pi^2 \big(r^2+\frac34 \big)^2}{(1 + r)^2} - 2 \pi^2(r^2+1) > 2 \sqrt{\pi^4(r^2-1)^2+160r^2-64r^4-36}
\end{align}
holds for all $r > \frac12$.
\end{lemma}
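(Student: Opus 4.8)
The plan is to prove \eqref{eq:ungetuem} by squaring, which is legitimate once we check that the left-hand side is positive and that the radicand on the right is positive (so the right-hand side is a well-defined positive real). Write the left-hand side as $\pi^2 A(r)$ with $A(r) := \frac{9(r^2+\frac34)^2}{(1+r)^2} - 2(r^2+1)$, and abbreviate the radicand by $f(r) := \pi^4(r^2-1)^2 + 160 r^2 - 64 r^4 - 36$. Substituting $s = r^2 \ge 0$ turns $f$ into the quadratic $(\pi^4-64)s^2 + (160-2\pi^4)s + (\pi^4-36)$, whose leading coefficient is positive since $\pi^4 > 64$ and whose discriminant equals $16384 - 240\pi^4 < 0$ (as $240\pi^4 > 240\cdot 97 > 16384$); hence $f(r) > 0$ for every real $r$. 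For the left-hand side, clearing the denominator gives $A(r)(1+r)^2 = 7r^4 - 4r^3 + \frac{19}{2}r^2 - 4r + \frac{49}{16}$, a quartic whose minimum over $r \ge 0$ is positive, so $A(r) > 0$ for $r > \tfrac12$.

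With both signs settled, \eqref{eq:ungetuem} is equivalent to $\pi^4 A(r)^2 > 4 f(r)$, that is, to $\pi^4\big[A(r)^2 - 4(r^2-1)^2\big] > 4(160 r^2 - 64 r^4 - 36)$. I would then exploit the factorisation $A^2 - 4(r^2-1)^2 = \big[A - 2(r^2-1)\big]\big[A + 2(r^2-1)\big]$, where a short computation gives $A - 2(r^2-1) = N_1/(1+r)^2$ and $A + 2(r^2-1) = N_2/(1+r)^2$ with $N_1 := 5r^4 - 8r^3 + \frac{19}{2}r^2 + \frac{81}{16}$ and $N_2 := 9r^4 + \frac{19}{2}r^2 - 8r + \frac{17}{16}$. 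Multiplying by $(1+r)^4$ and using $256 r^4 - 640 r^2 + 144 = 16(4r^2-9)(4r^2-1)$, the target becomes the polynomial inequality
\[
 \pi^4 N_1(r)\, N_2(r) + 16(4r^2-9)(4r^2-1)(1+r)^4 > 0, \qquad r > \tfrac12.
\]

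The key structural observation is that both $N_2$ and $4r^2-1$ vanish at $r=\tfrac12$, reflecting the fact that $(c,d)=(0,\tfrac12)$ is the optimiser, where the estimate degenerates. Concretely $N_2 = \frac{1}{16}(2r-1)(72 r^3 + 36 r^2 + 94 r - 17)$ and $4r^2-1 = (2r-1)(2r+1)$, so dividing by the positive factor $2r-1$ reduces the target to
\[
 \pi^4 N_1(r)\,\widetilde N_2(r) + 16(4r^2-9)(2r+1)(1+r)^4 > 0,
\]
where $\widetilde N_2(r) := \tfrac{1}{16}(72 r^3 + 36 r^2 + 94 r - 17)$; note that $\widetilde N_2(\tfrac12)=3$ and $\widetilde N_2$ is increasing, so $\widetilde N_2>0$ on $[\tfrac12,\infty)$, and likewise $N_1>0$ there. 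This reduced inequality is now \emph{strict already at} $r=\tfrac12$, so the delicacy that the two sides of \eqref{eq:ungetuem} agree to leading order at the optimiser has been neutralised. For $r \ge \tfrac32$ the inequality is immediate, since then $4r^2-9\ge 0$ while $N_1,\widetilde N_2>0$; the only substantial range is $r\in(\tfrac12,\tfrac32)$, where the second term is negative. There I would replace $\pi^4$ by the rational lower bound $\pi^4 > 97$ (permissible since $N_1\widetilde N_2>0$) and reduce to the positivity of an explicit degree-$7$ polynomial with rational coefficients.

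The main obstacle is exactly this final polynomial positivity — the ``schrecklich'' computation the lemma is named for. I expect it to be dispatched by shifting the variable, writing $r = \tfrac12 + \rho$ with $\rho>0$ and verifying that the resulting polynomial in $\rho$ has a positive constant term and nonnegative coefficients; this certifies positivity for all $\rho>0$ at once and in particular covers both the compact interval and (redundantly) the range $r\ge\tfrac32$. Should some coefficient fail to be nonnegative after the shift, one falls back to a direct estimate on the compact interval $[\tfrac12,\tfrac32]$, bounding the negative contributions by the manifestly positive term $97\,N_1\widetilde N_2$.
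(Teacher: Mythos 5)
Your proposal is correct and follows essentially the same route as the paper's proof: after checking positivity of the left-hand side, both square the inequality, factor out the critical factor $2r-1$ from the resulting degree-8 polynomial (the paper's factorization is the same as yours, with the $\pi^4$-part further split into explicit quadratics), replace $\pi^4$ by a rational lower bound, and certify positivity of the remaining degree-7 polynomial by expanding in $s = r-\tfrac12$ and checking that all coefficients are nonnegative. The final expansion you deferred does succeed — the paper carries it out with the slightly weaker bound $\pi^4 > 96$ and obtains $32\big(162 + 1269 s + 2502 s^2 + 3759 s^3 + 3680 s^4 + 3136 s^5 + 1552 s^6 + 572 s^7\big)$, all coefficients positive — so your fallback estimate is not needed.
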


\begin{proof}
Note first that the left-hand side of the inequality \eqref{eq:ungetuem} is positive for all $r > \frac{1}{2}$ since
\begin{align*}
 9 (r^2 + \tfrac34)^2 - 2 (r^2 + 1) (1 + r)^2 = \frac{27}{8} + 6 s + 14 s^2 + 10 s^3 + 7 s^4,
\end{align*}
where $s = r - \tfrac12 > 0$. Thus \eqref{eq:ungetuem} follows if we can show its squared version
\begin{align*}
 \frac{81 \pi^4 (r^2 + \tfrac34)^4}{(1+r)^4} - 36 \pi^4 \frac{(r^2 + \tfrac34)^2 (r^2 + 1)}{(1+r)^2} & + 4 \pi^4 (r^2 + 1)^2 \\
 & > 4 \big( \pi^4(r^2-1)^2+160r^2-64r^4-36 \big),
\end{align*}
which is equivalent to
\begin{align}\label{eq:nochSchlimmer}
\begin{split}
 81 \pi^4 (r^2 + \tfrac34)^4 - 36 \pi^4 & (r^2 + \tfrac34)^2 (r^2 + 1) (1+r)^2 + 4 \pi^4 (r^2 + 1)^2 (1 + r)^4 \\
 & - 4 \big( \pi^4(r^2-1)^2+160r^2-64r^4-36 \big) (1 + r)^4 > 0
\end{split}
\end{align}
for $r > \frac12$. Furthermore, by collecting the terms with respectively without the factor $\pi^4$ and factorizing one sees that the left-hand side of \eqref{eq:nochSchlimmer} equals
\begin{align*}
 \frac{2 r - 1}{256} \Big( 4096 & (1+r)^4 (2r-3) (2r+1) (2r+3) \\
 & + \pi^4 (6 r - 1) (4 r^2 - 8 r + 9) (12r^2 +8r + 17) (20 r^2 + 8 r + 9) \Big).
\end{align*}
Since $4 r^2 - 8 r + 9 > 0$ holds for all real $r$ and $\pi^4 > 96 = 3 \cdot 32$, to obtain \eqref{eq:nochSchlimmer} it suffices to show $p (r) > 0$ for all $r > \frac12$, where
\begin{align*}
 p (r) & = 128 (1+r)^4 (2r-3) (2r+1) (2r+3) \\
 & \quad + 3 (6 r - 1) (4 r^2 - 8 r + 9) (12r^2 +8r + 17) (20 r^2 + 8 r + 9) \\
 & = 32 \big( 162 + 1269 s + 2502 s^2 + 3759 s^3 + 3680 s^4 + 3136 s^5 + 1552 s^6 + 572 s^7 \big)
\end{align*}
for $s = r - \frac12$. Especially, for $r > \frac{1}{2}$ we have $s > 0$ and, hence, $p (r) > 0$. This completes the proof of the lemma.
\end{proof}

For the remainder of this appendix, recall that the constants $c_\star$ and $r_\star$ are given by 
\[
	c_\star = \frac{2(2j_{0,1}+\pi)^2}{9\pi^2}-1 \approx 0.4235,\qquad r_\star = \sqrt{1-c_\star^2} - \sqrt{\frac14-c_\star^2} \approx 0.6401.
\]

\begin{lemma}\label{lem:schrecklich2}
	The inequality
	\[
		3\pi^2(5r^2+1)-\sqrt{9\pi^4(3r^2-1)^2+8192r^2c_\star^2} - \frac{54\pi^2r^2}{(1+r)^2}(1-c_\star^2)< 0
	\]
	holds for all $r\in(0,1]$.
\end{lemma}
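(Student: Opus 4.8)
The plan is to reduce the stated inequality to a claim about a single explicit polynomial in $r$ being positive on $(0,1]$, mirroring the strategy of Lemma~\ref{lem:schrecklich}. First I would abbreviate $A := 3\pi^2(5r^2+1)$, $B := 9\pi^4(3r^2-1)^2 + 8192 r^2 c_\star^2$, and $C := \frac{54\pi^2 r^2}{(1+r)^2}(1-c_\star^2)$, so the claim is $A - \sqrt{B} - C < 0$, i.e.\ $A - C < \sqrt{B}$. Since $B > 0$ always, this follows immediately whenever $A - C \le 0$; the substantive case is $A - C > 0$, where I can square to obtain the equivalent polynomial inequality $(A-C)^2 < B$. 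Clearing the denominator $(1+r)^4$ turns this into $(A(1+r)^2 - \frac{54\pi^2 r^2(1-c_\star^2)}{1})^2 < B(1+r)^4$ after suitable bookkeeping, a comparison between two polynomials in $r$ (with the numerical constant $c_\star^2$ appearing as a coefficient).

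The key steps, in order, would be: (i) dispose of the easy sign case $A - C \le 0$; (ii) in the case $A - C > 0$, square and multiply through by $(1+r)^4$ to reach a polynomial inequality $P(r) > 0$ on $(0,1]$; (iii) substitute the numerical value $c_\star = \frac{2(2j_{0,1}+\pi)^2}{9\pi^2}-1$ (or, to keep things rigorous, a rational lower and upper bound bracketing $c_\star \approx 0.4235$, since $j_{0,1}$ is transcendental); and (iv) verify positivity of the resulting polynomial. For step (iv) I would again try the shift $s = r - \tfrac12$ or simply $s = r$ and hope, as in Lemma~\ref{lem:schrecklich}, that after expansion all coefficients of the polynomial in $s$ come out positive, which would make positivity on the relevant range transparent. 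If the coefficients are not all of one sign, I would instead bound the polynomial below on $(0,1]$ by grouping terms or by checking it has no real root in $(0,1]$ together with one evaluation (e.g.\ at an endpoint) to fix the sign.

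The main obstacle I anticipate is twofold. First, because $c_\star$ is defined through the Bessel zero $j_{0,1}$, the inequality is not a clean polynomial identity; I must replace $c_\star$ by explicit rational bounds and ensure the inequality survives with those bounds, choosing them tight enough (on the correct side) that no genuine case is lost. Concretely, since the term $-C$ carries the factor $(1-c_\star^2)$ and I want the left-hand side negative, I would use a \emph{lower} bound for $1 - c_\star^2$ (hence an upper bound for $c_\star^2$) in the subtracted term, and correspondingly a convenient bound in $B$; keeping track of which direction each bound must go is the delicate part. Second, the honest difficulty is the algebra: fully expanding $(A-C)^2 (1+r)^4$ versus $B(1+r)^4$ produces a degree-eight polynomial with large numerical coefficients, and the positivity verification in step (iv) is where a sign error would hide. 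I expect this to be routine but heavy, best delegated to a symbolic computation and then reported as a factored or manifestly-positive form, exactly as done for $p(r)$ in the proof of Lemma~\ref{lem:schrecklich}.
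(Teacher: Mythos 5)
Your reduction is exactly the one the paper uses: square the inequality (no case distinction is actually needed, since $(A-C)^2 < B$ already implies $A - C \le |A-C| < \sqrt{B}$), multiply by $(1+r)^4$, divide by $r^2$, and arrive at the positivity of an explicit polynomial — in the paper's notation,
\begin{align*}
 G (r) = 324 \pi^4 (1 - c_\star^2) (5 r^2 + 1) (1 + r)^2 - 144 \pi^4 (1 + r^2) (1 + r)^4 - 2916 \pi^4 r^2 (1 - c_\star^2)^2 + 8192 c_\star^2 (1 + r)^4 ,
\end{align*}
and your bookkeeping of which direction to bound $c_\star^2$ (upper bound in the subtracted term, lower bound inside the square root) is correct. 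The gap is in step (iv), and it is not a technicality: the analogy with Lemma~\ref{lem:schrecklich} breaks down, because $G$ does \emph{not} admit a one-signed expansion in $s = r$ or $s = r - \tfrac12$. Numerically, the coefficient of $r^2$ in $G$ equals $\pi^4\big[1944(1-c_\star^2) - 1008 - 2916(1-c_\star^2)^2\big] + 49152\, c_\star^2 \approx -1.25 \times 10^5 < 0$, and $G$ has an interior minimum near $r \approx 0.6$ of size only about $1.4 \times 10^3$, while its individual terms are of order $10^5$; in particular $G$ is \emph{decreasing} at $r = \tfrac12$, so the expansion around $s = r - \tfrac12$ has a negative linear coefficient. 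So the "all coefficients positive" plan provably fails, and the inequality is tight to within roughly one percent of the coefficient size — this is why the lemma needs a genuinely different idea rather than heavier algebra.

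The paper's idea, which your proposal is missing, is a zero-exclusion argument of spectral type: check $G(0) > 0$, suppose $G(r_0) = 0$ for some $r_0 \in (0,1]$, and note that a nontrivial $H^1$ function vanishing at a point of an interval of length $\ell$ has Rayleigh quotient at least $\big(\tfrac{\pi}{2\ell}\big)^2$ on that interval (Neumann--Dirichlet eigenvalue bound). Computing $\int (G')^2 / \int G^2$ exactly on $(0,\tfrac12)$, $(\tfrac12,\tfrac34)$, $(\tfrac34,1)$ gives values below $\pi^2$, $4\pi^2$, $4\pi^2$ respectively (the inequalities \eqref{eq:Ginequalities}), contradicting the existence of a zero in any of the three subintervals. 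Your fallback — "check $G$ has no real root in $(0,1]$ plus one endpoint evaluation" — is a correct statement of what must be proven, not a method for proving it; it could be made rigorous (e.g.\ Sturm sequences applied to $G$ with tight rational brackets for $c_\star^2$ and $\pi^4$, tight because of the one-percent margin), but that verification is precisely where the mathematical content of the lemma lies, and as written your proposal stops short of supplying it.
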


\begin{proof}
	It suffices to check that for all $r\in(0,1]$
	\[
	\left(3\pi^2(5r^2+1)-\frac{54\pi^2r^2}{(1+r)^2}(1-c^2_\star)\right)^2< 9\pi^4(3r^2-1)^2+8192r^2c^2_\star.
	\]
	After multiplying the inequality by $(1+r)^4$ we end up with the necessity to show that for all $r\in(0,1]$
	\begin{align*}
	 \big(9\pi^4(3r^2-1)^2 + & \, 8192r^2c^2_\star \big) (1 + r)^4 \\
	 & - \left( 3\pi^2(5r^2+1) (1 + r)^2 - 54\pi^2r^2(1-c^2_\star) \right)^2 > 0.
	\end{align*}
	Simplifying the left-hand side and dividing it by $r^2$ leads to the requirement to show that the polynomial
	\begin{align*}
	 G (r) := 324 \pi^4 (1 - c_\star^2) (5 r^2 + 1) & (1 + r)^2 - 144 \pi^4 (1 + r^2) (1 + r)^4 \\
	 & - 2916 \pi^4 r^2 (1 - c_\star^2)^2 + 8192 c_\star^2 (1 + r)^4
	\end{align*}
	is positive for $r \in (0, 1]$. In fact, by a direct computation we get that $G(0) > 0$. Suppose for the moment that $G$ is not positive on the whole interval $(0,1]$. Then there exists $r_0 \in (0,1]$ such that $G(r_0) = 0$. In this case one of the following $r_0 \in (0,\frac12]$, $r_0\in (\frac12,\frac34]$,
	or $r_0\in (\frac34,1]$ holds. By a direct, but cumbersome computation, of the integrals we verify the following inequalities\footnote{Approximate numerical values of the ratios in the left-hand sides are 7.28761, 29.7613 and 9.11554, respectively.}
	\begin{equation}\label{eq:Ginequalities}
		\frac{\displaystyle\int_0^{\frac12}(G'(r))^2\dd r}{\displaystyle\int_0^{\frac12}(G(r))^2\dd r} < \pi^2,\quad
		\frac{\displaystyle\int_{\frac12}^{\frac34}(G'(r))^2\dd r}{\displaystyle\int_{\frac12}^{\frac34}(G(r))^2\dd r} < 4\pi^2,
		\quad\text{and}\quad
		\frac{\displaystyle\int_{\frac34}^1(G'(r))^2\dd r}{\displaystyle\int_{\frac34}^1(G(r))^2\dd r} < 4\pi^2.
	\end{equation} 
	If $r_0\in(0,\frac12]$, then we get by the min-max principle that for any non-trivial $f\in H^1(0,\frac12)$
	with  $f(r_0) = 0$ there holds
	\[
		\frac{\displaystyle\int_0^{\frac12}|f'(r)|^2\dd r}{\displaystyle\int_0^{\frac12}|f(r)|^2\dd r} \ge \pi^2
	\]	
	leading to a contradiction with the first inequality in~\eqref{eq:Ginequalities}.
	If $r_0\in(\frac12,\frac34]$, then we get by the min-max principle that for any non-trivial $f\in H^1(\frac12,\frac34)$
	with  $f(r_0) = 0$ there holds
	\[
	\frac{\displaystyle\int_{\frac12}^{\frac34}|f'(r)|^2\dd r}{\displaystyle\int_{\frac12}^{\frac34}|f(r)|^2\dd r} \ge 4\pi^2
	\]	
	leading to a contradiction with the second inequality in~\eqref{eq:Ginequalities}. Analogously, we exclude the case when $r_0\in(\frac34,1]$ by a contradiction to the third inequality in~\eqref{eq:Ginequalities}. Thus, $G$ does not vanish on $(0,1]$, by which the argument is complete.
\end{proof}

\begin{lemma}\label{lem:schrecklich3}
	The inequality
	\[
	3\pi^2(5r^2+1)-\sqrt{9\pi^4(3r^2-1)^2} - \frac{54\pi^2r^2}{(1+r)^2}< 0
	\]
	holds for all $r\in\left(0,\frac12\right)$.
\end{lemma}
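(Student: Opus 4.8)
The plan is to exploit the fact that on the interval $(0,\frac12)$ the argument of the square root is a perfect square whose base has a definite sign, so the radical can be removed outright. Concretely, for $r\in(0,\frac12)$ one has $3r^2<\frac34<1$, whence $3r^2-1<0$ and therefore
\[
\sqrt{9\pi^4(3r^2-1)^2}=3\pi^2\,|3r^2-1|=3\pi^2(1-3r^2).
\]
Substituting this into the left-hand side of the claimed inequality, the two terms not involving the fraction combine cleanly as $3\pi^2\big[(5r^2+1)-(1-3r^2)\big]=3\pi^2\cdot 8r^2=24\pi^2 r^2$.

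Next I would factor out the positive quantity $6\pi^2 r^2$ from what remains, obtaining
\[
24\pi^2 r^2-\frac{54\pi^2 r^2}{(1+r)^2}
=6\pi^2 r^2\left(4-\frac{9}{(1+r)^2}\right).
\]
Since $6\pi^2 r^2>0$ for $r\in(0,\frac12)$, the sign of the whole expression is governed by the bracketed factor, so it suffices to show $4-\frac{9}{(1+r)^2}<0$, that is $(1+r)^2<\frac94$, that is $1+r<\frac32$. This is precisely the hypothesis $r<\frac12$, which finishes the proof.

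There is essentially no substantial obstacle here: the only point requiring genuine care is verifying the sign of $3r^2-1$ on the given range, so that the absolute value resolves to $1-3r^2$ rather than $3r^2-1$; once the radical is eliminated the remainder is an elementary factorization. I would also remark that at the endpoint $r=\frac12$ the bracketed factor vanishes and the expression equals zero, which is consistent with the strict inequality being asserted only on the open interval $(0,\frac12)$.
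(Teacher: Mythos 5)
Your proof is correct, and it takes a genuinely simpler route than the paper's. The paper never resolves the radical: it instead argues that the claim follows from the squared inequality $\bigl(3\pi^2(5r^2+1)-\tfrac{54\pi^2r^2}{(1+r)^2}\bigr)^2< 9\pi^4(3r^2-1)^2$, multiplies through by $(1+r)^4/(\pi^4 r^2)$, and is then forced to factorize the resulting sextic polynomial as $(1-2r)(2r+5)(r^4+2r^3-7r^2+2r+1)$ and to bound the quartic factor from below by discarding $r^4+2r^3$ and locating the roots of $-7r^2+2r+1$. You instead observe the one fact the paper's proof bypasses: on $(0,\tfrac12)$ one has $3r^2-1<0$, so $\sqrt{9\pi^4(3r^2-1)^2}=3\pi^2(1-3r^2)$ exactly, after which the expression collapses to $6\pi^2 r^2\bigl(4-\tfrac{9}{(1+r)^2}\bigr)$ and the sign condition reduces to $r<\tfrac12$ with no squaring, no polynomial expansion, and no root-finding. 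Your argument also makes transparent why equality occurs precisely at $r=\tfrac12$ (the bracket vanishes there), which in the paper is only visible after the factorization produces the factor $1-2r$. In short, both proofs are valid; yours is shorter, avoids the squaring step (which in the paper silently relies on the implication $A^2<B^2\Rightarrow A<|B|$), and exhibits the degeneracy at the endpoint explicitly.
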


\begin{proof}
	It suffices to check that for all $r\in(0,\frac12)$
	\[
	\left(3\pi^2(5r^2+1)-\frac{54\pi^2r^2}{(1+r)^2}\right)^2< 9\pi^4(3r^2-1)^2.
	\]
	Expanding the left-hand side, combining like terms and multiplying the inequality by $\frac{(1+r)^4}{\pi^4r^2}$ we end up with the necessity to show that for all $r\in(0,\frac12)$
	\[
9(5r^2+1)(1+r)^2-4(1+r)^4(1+r^2)-81 r^2> 0.
	\]
	Let us define the polynomial
	\[
	\begin{aligned}
	H(r)&:=9(5r^2+1)(1+r)^2-4(1+r)^4(1+r^2)-81 r^2 \\
	&=(1-2 r) (2 r+5) \left(r^4+2 r^3-7 r^2+2 r+1\right).
	\end{aligned}
	\]
	We need to show that $H(r) > 0$ for all $r\in(0,\frac12)$. The factors $1-2r$ and $2r+5$ are clearly positive for all $r\in(0,\frac12)$.
	For the third factor we get that for all $r\in(0,\frac12)$
	\[
	r^4+2 r^3-7 r^2+2 r+1 \ge -7 r^2+2 r+1 = -7\left(
	r+\frac{2\sqrt{2}-1}{7}\right)\left(
	r-\frac{1+2\sqrt{2}}{7}\right) > 0,
	\]
	where we used that $\frac{1+2\sqrt{2}}{7} > \frac12$.
	Thus, we conclude that $H(r) > 0$ for all $r\in(0,\frac12)$. 
\end{proof}

\begin{lemma}\label{lem:schrecklich4}
	The inequality
	\[
	3 \pi ^2 \left(5 r^2+1\right)-\sqrt{9 \pi ^4 \left(3r^2-1\right)^2-128 \left(16 r^4-40 r^2+9\right)}-\frac{27 \pi ^2 \left(r^2+\frac{3}{4}\right)^2}{2 (r+1)^2} < 0
	\]
	holds for all $r\in\left(\frac12,r_
	\star\right]$.
\end{lemma}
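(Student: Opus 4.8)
The plan is to isolate the square root and square, reducing the claim to a polynomial positivity statement. Write the left-hand side as $A - \sqrt{B}$ with
\[
A := 3\pi^2(5r^2+1) - \frac{27\pi^2\big(r^2+\tfrac34\big)^2}{2(1+r)^2}, \qquad B := 9\pi^4(3r^2-1)^2 - 128\big(16r^4-40r^2+9\big).
\]
First I would confirm that $\sqrt{B}$ is well defined and positive: since $16r^4-40r^2+9 = (2r-1)(2r+1)(2r-3)(2r+3)$ and $\tfrac12 < r \le r_\star < \tfrac32$, this quartic is negative on the interval, whence $-128(16r^4-40r^2+9) > 0$ and $B > 0$. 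Next I would check $A > 0$, which after multiplying by $(1+r)^2 > 0$ reduces to the elementary quartic inequality $3(5r^2+1)(1+r)^2 > \tfrac{27}{2}(r^2+\tfrac34)^2$. With $A > 0$ and $\sqrt B > 0$, the asserted bound $A - \sqrt B < 0$ is equivalent to $A^2 < B$.

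Clearing the denominator by multiplying $B - A^2$ with $(1+r)^4 > 0$ turns this into $\Psi(r) > 0$, where $\Psi(r) = \pi^4 P(r) - 128\,Q(r)$ with $Q(r) := (16r^4-40r^2+9)(1+r)^4$ and
\[
P(r) := 9(3r^2-1)^2(1+r)^4 - \Big(3(5r^2+1)(1+r)^2 - \tfrac{27}{2}\big(r^2+\tfrac34\big)^2\Big)^2.
\]
The structural key is that $\Psi(\tfrac12) = 0$: this encodes the fact that $(r,\theta) = (\tfrac12,\tfrac{\pi}{2})$ is the conjectured optimizer, where the inequality becomes an equality. Concretely, both $P$ and $Q$ vanish at $r = \tfrac12$, so $(2r-1)$ divides $\Psi$ as a polynomial in $r$; factoring it out yields $\Psi(r) = (2r-1)\,\Theta(r)$, where $\Theta(r) = \pi^4 S(r) - 128\,R(r)$ with $R(r) = (2r+1)(2r-3)(2r+3)(1+r)^4$ and $S(r) = P(r)/(2r-1)$. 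Since $2r-1 > 0$ on the interval, it suffices to prove $\Theta(r) > 0$ there; note that $\Theta(\tfrac12) > 0$, so the factorization is by a simple zero.

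The main obstacle is this final positivity statement for $\Theta$, which is genuinely harder than the analogous step in Lemma~\ref{lem:schrecklich3}: because the radicand is not a perfect square, the $\pi^4$ terms no longer cancel, and $\Theta$ is a degree-seven polynomial carrying the transcendental coefficient $\pi^4$, to be controlled up to the transcendental endpoint $r_\star$. On $(\tfrac12, r_\star]$ one has $S < 0$ and $R < 0$, so $\Theta > 0$ is equivalent to the competition $128\,|R| > \pi^4\,|S|$ between two positive quantities. I would settle it by a safe rational enclosure: bounding $\pi^4$ from above by a rational number, say $\pi^4 < \tfrac{9741}{100}$, and using $S < 0$ gives $\Theta(r) \ge \tfrac{9741}{100} S(r) - 128\,R(r)$, so it is enough to verify the rational-coefficient inequality $128\,|R(r)| > \tfrac{9741}{100}\,|S(r)|$. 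To avoid the irrational endpoint I would establish this on the larger, nicer interval $(\tfrac12, \tfrac23]$, which contains $(\tfrac12, r_\star]$ because $r_\star \approx 0.6401 < \tfrac23$. The resulting polynomial inequality can then be closed either by explicit factorization in the spirit of Lemma~\ref{lem:schrecklich3}, or, if factoring is unwieldy, by the Rayleigh-quotient contradiction argument of Lemma~\ref{lem:schrecklich2}, where a hypothetical zero would force a lower bound on a computable Rayleigh quotient that the function cannot satisfy. Since $\Theta$ stays of order $10^3$ across the interval, there is ample numerical margin for such crude estimates to succeed.
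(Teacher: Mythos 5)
Your proposal is correct in substance but takes a genuinely different route from the paper's proof. The paper never squares away the radical: it writes the left-hand side as $S=S_1+S_2+S_3$ (the rational term, the quadratic polynomial term, and minus the square root), proves strict convexity $S''>0$ on $(\tfrac12,r_\star)$ by controlling the signs of $S_1'''$ and $S_3'''$ (so that the bounds on $S_1''$ and $S_3''$ reduce to endpoint evaluations, $S_1''(\tfrac12)=-20\pi^2$ and $S_3''(r_\star)\approx -22.28$), and then concludes from convexity together with $S(\tfrac12)=0$ and $S(r_\star)\approx -1.81<0$. You instead eliminate the radical algebraically: check $B>0$ and $A>0$, square to $A^2<B$, clear denominators, factor out the simple zero $(2r-1)$ forced by the equality case at the optimizer, majorize $\pi^4$ by a rational constant (legitimate precisely because it multiplies the negative quantity $S$), and enlarge the interval to the rational endpoint $\tfrac23>r_\star$, reducing everything to positivity of a single degree-seven polynomial with rational coefficients on $[\tfrac12,\tfrac23]$ --- a statement decidable by exact arithmetic. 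What each approach buys: the paper trades high-degree polynomial algebra for calculus on the square root plus numerical evaluations at the transcendental point $r_\star$; your reduction expels all transcendental data ($\pi$, $j_{0,1}$, $r_\star$) from the final inequality, but defers the real work to that polynomial inequality, which you do not carry out. I checked that it holds with ample margin: your $T:=\tfrac{9741}{100}S-128R$ stays roughly between $2\cdot 10^3$ and $3\cdot 10^3$ on $[\tfrac12,\tfrac23]$, and its Rayleigh quotient is of order $5$, far below the threshold $(3\pi)^2=9\pi^2$ that the argument of Lemma~\ref{lem:schrecklich2} requires on an interval of length $\tfrac16$, so your fallback route would indeed close the proof. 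Two small gaps you should fill to make this airtight: (i) the asserted sign $S<0$, which follows from the explicit factorization
\begin{equation*}
S(r)=\tfrac34\left(r+\tfrac92\right)\left(7r^2+4r+\tfrac94\right)\left(\tfrac{27}{2}\left(r^2+\tfrac34\right)^2-6(1+r)^2(r^2+1)\right),
\end{equation*}
whose last factor is negative on $[\tfrac12,\tfrac23]$ (alternatively, wherever $S\ge 0$ you get $\Theta\ge -128R>0$ for free, so that case costs nothing); and (ii) the inequality $r_\star<\tfrac23$, which needs rational bounds on $j_{0,1}$ and $\pi$ --- though the paper's own proof also leans on numerics at $r_\star$, so this is no worse than what is already accepted there.
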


\begin{proof}
Consider the function
\[
\begin{aligned}
	S(r) := 3 \pi ^2 \left(5 r^2+1\right)- & \, \sqrt{9 \pi ^4 \left(3r^2-1\right)^2-128 \left(16 r^4- 40 r^2+9\right)} \\
	& \qquad \qquad \qquad \qquad \qquad \qquad -\frac{27 \pi ^2 \left(r^2+\frac{3}{4}\right)^2}{2 (r+1)^2}.
\end{aligned}
\]
The graph of $S$ is plotted in Figure~\ref{graph}.
\begin{figure}[h]
	\includegraphics[width=8cm]{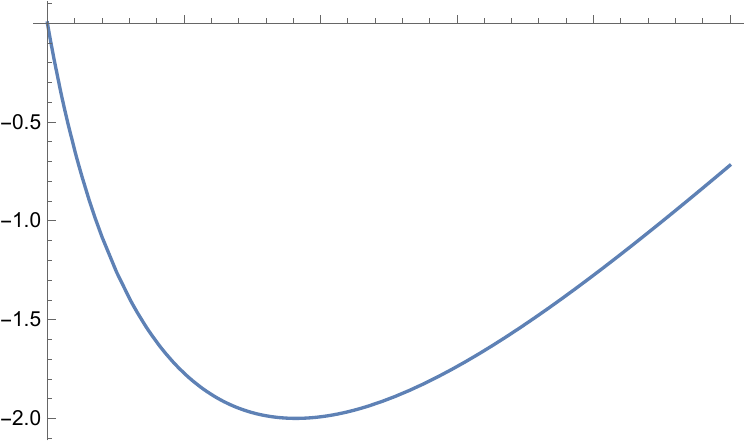}
	\caption{Graph of the function $S$ on the interval $[\frac12,\frac34]$}
	\label{graph}
\end{figure}
Notice that 
\[
	S\left(\frac12\right) = 0,\qquad S\left(r_\star\right)\approx -1.81 < 0.
\]
It suffices to show that $S''(r) > 0$ for all $r\in(\frac12,r_\star)$. Let us introduce the auxiliary functions
\[
\begin{aligned}
	&S_1(r) := -\frac{27 \pi ^2 \left(r^2+\frac{3}{4}\right)^2}{2 (r+1)^2},
	\quad S_2(r) := 3 \pi ^2 \left(5 r^2+1\right),\\
	&S_3(r) := -\sqrt{9 \pi ^4 \left(3 r^2-1\right)^2-128 \left(16 r^4-40 r^2+9\right)},
\end{aligned}	
\]
so that the representation $S = S_1 + S_2 + S_3$ holds.
The second and the third derivatives of $S_1$ are given by
\[
	S''_1(r) = -\frac{27 \pi ^2 \left(16 r^4+64 r^3+96 r^2-48 r+51\right)}{16 (r+1)^4},\qquad S_1'''(r) = -\frac{567 \pi ^2 (4 r-3)}{4 (r+1)^5}.
\]
Hence, for all $r\in(\frac12,r_\star)$ we have $S'''_1(r) > 0$ and thus $S''_1(r) > S''_1(\frac12) = -20\pi^2$.
The second and the third derivatives of $S_3$ are as follows 
\[
\begin{aligned}
	S_3''(r) &= \frac{-486 \pi ^8 \left(3 r^2-1\right)^3-131072 \left(64 r^6-240 r^4+108 r^2-45\right)}{\left(9 \pi ^4 \left(1-3 r^2\right)^2-128 \left(16 r^4-40 r^2+9\right)\right)^{3/2}} \\
	& \qquad \qquad + \frac{2304 \pi ^4 \left(288 r^6-684 r^4+291 r^2-47\right)}{\left(9 \pi ^4 \left(1-3 r^2\right)^2-128 \left(16 r^4-40 r^2+9\right)\right)^{3/2}},\\
	S_3'''(r) &= -\frac{1536 \left(32768-207 \pi ^4\right) r \left((81 \pi^4 - 2048) r^4 + 1152 - 9 \pi^4 \right)}{\left(9 \pi ^4 \left(1-3 r^2\right)^2-128 \left(16 r^4-40 r^2+9\right)\right)^{5/2}}.
\end{aligned}	
\]
Using that $32768-207\pi^4 > 0$, $81\pi^4-2048 > 0$, and $1152 - 9 \pi^4 > 0$, we conclude that $S_3'''(r) < 0$ for all $r\in(\frac12,r_\star)$. Thus, we get that $S_3''(r) > S_3''(r_\star) \approx -22.28$ for all $r\in[\tfrac12,r_\star)$. It remains to notice that for all $r\in[\frac12,r_\star)$
\[
	S''(r) = S_1''(r) + S_2''(r) +S_3''(r) > -20\pi^2+30\pi^2+S_3''(r_\star) > 0.
\]
This completes the proof of the lemma.
\end{proof}

\end{document}